\numberwithin{equation}{section}
\newtheorem{definition}{Definition}[section]
\newtheorem{theorem}{Theorem}[section]
\newtheorem{remark}{Remark}[section]
\newcommand*{\C}{\mathbb{C}}
\newcommand*{\R}{\mathbb{R}}
\newcommand*{\Z}{\mathbb{Z}}
\newcommand*{\N}{\mathbb{N}}
\newcommand{\comment}[1]{}
\title[Analytic theories around the simplest screw]%
      {Analytic theories around the simplest screw} 
\author[M. Suzuki]{Masatoshi Suzuki}
\date{Version of \today}
\subjclass[]{
42A82 
46E22 
44A60 
}
\keywords{
screw functions, 
Nevanlinna class, 
meromorphic inner functions, 
Hermite--Biehler class, 
de Branges spaces, 
canonical systems, 
structure Hamiltonians
}
\begin{abstract}
We present several analytic theories related to screw functions and 
describe the connections among them, taking the screw function of 
the simplest screw line as a guiding example.
Note that this article does not contain any new results. 
Nevertheless, subsequent developments have suggested that the analytic structures 
associated with the simplest screw provide a useful prototype for 
a broader theory connected with zeta-functions, 
Weil's quadratic forms, and related Hilbert-space structures arising in analytic number theory. 
\end{abstract}
\begin{document}

\setcounter{tocdepth}{1}
\tableofcontents

%
\section{Introduction} 
%

This article surveys several analytic theories surrounding screw functions and
explains the connections among them through the example of the simplest screw line.
Screw functions arise naturally from isometric embeddings of the real line into Hilbert spaces
and provide a link between several areas of analysis.
The article is purely expository and does not contain any new results.

The theory of positive-definite kernels is a rich and fruitful area of analysis. 
In Kre\u{\i}n--Langer \cite{KrLa77,KrLa85,KrLa14}, screw functions,
Hilbert spaces defined by positive-definite kernels,
and their relation to the so-called canonical systems are studied in detail.
On the other hand,
as outlined in Winkler~\cite{Win14} and Woracek~\cite{Wo15},
canonical systems are closely related to de Branges spaces,
which are reproducing kernel Hilbert spaces consisting of entire functions
and play an important role in modern analysis.
We do not attempt to reproduce the proofs of these results.
Instead, our aim is to provide a concrete roadmap through these interconnected theories
by focusing on the screw function attached to the simplest screw line.
\medskip

What we call the ``simplest screw'' in this article is the one 
given by the mapping $x_0: \R \to \R^3$ defined by 
\begin{equation} \label{eq_101}
x_0(t) = (\cos(t),\,\sin(t),\,t). 
\end{equation}
A continuous map $x: \R \to \mathcal{H}$ 
from the real line to a {\it real} Hilbert space 
is called a screw line or helix  in $\mathcal{H}$ 
if the length of chord $\Vert x(t)-x(s) \Vert$ 
depends only on the difference $|t-s|$ 
for every $t,s \in \R$. 
The map \eqref{eq_101} is a screw line in this sense, 
because 
\[
\aligned 
\Vert x_0(t)-x_0(s) \Vert^2 
&= 2(1-\cos(t-s)) + (t-s)^2
\endaligned 
\]
with respect to the Euclidean metric on $\R^3$. 
Similarly, 
the map $x:\R \to \R^2$ defined by $x(t)=(\cos(t),\sin(t))$ is also a screw line, 
but we regard it as a two-dimensional ``spiral'' in a colloquial sense 
and do not call it 
``the simplest screw line" 
or 
``the simplest helix". 
Of course, this is just a play on words. 
For uniformity of terminology, 
following Kre\u{\i}n--Langer \cite{KrLa14} and von Neumann \cite{NeSc41}, 
we only use the term ``screw line'' rather than ``helix'' or ``spiral'' hereinafter
and call \eqref{eq_101} the simplest screw line.

A screw line $x:\R\to\mathcal H$
determines an isometric embedding of $\R$
into the Hilbert space $\mathcal H$.
Associated with this embedding is the function 
$F_x(t)=\Vert x(t)-x(0)\Vert$, 
called the screw function of $x$
by von Neumann--Schoenberg \cite{NeSc41}.
Various areas of analysis are connected through screw functions, 
but we do not aim to describe them exhaustively. 
In this article, we first discuss the connections among the following topics 
using the simplest screw line \eqref{eq_101}: 
\begin{enumerate}
\item Screw functions and screw lines; 
\item $L^2$-spaces $L^2(\tau)$ for the spectral measures $\tau$ of screw functions;    
\item Hilbert spaces $\mathcal{H}(G_g)$ defined by positive-definite kernels for screw functions; 
\item Model spaces and de Branges spaces. 
\end{enumerate}

We then turn to the following topics  
in order to state the relation between $\mathcal{H}(G_g)$ 
and de Branges spaces in more detail:
\begin{enumerate}
\item[(5)] Canonical systems of the first-order differential equations; 
\item[(6)] Structure Hamiltonians $H$ of de Branges spaces; 
\item[(7)] Hilbert spaces $\widehat{L}^2(H)$ defined by Hamiltonians.  
\end{enumerate}

In addition, we also mention the relations with the following topics as appendices:
\begin{enumerate}
\item[(8)] Krein's string; 
\item[(9)] Infinitely divisible distributions; 
\item[(10)] Mean periodic functions.  
\end{enumerate}

The sections that follow are structured so that 
the first half outlines a general theory about the topic of the section, 
and the second half applies it to the screw function 
of the simplest screw line \eqref{eq_101}. 
\medskip

Although the present article is devoted to the simplest screw and its associated analytic theories, the viewpoint developed here has recently found applications in analytic number theory. In particular, the simplest screw discussed throughout this article has served as a prototype for the study of screw functions associated with the Riemann zeta-function. These developments suggest that screw functions provide a natural bridge between harmonic analysis, operator theory, and analytic number theory. A brief account of these developments is included in Section~\ref{section_17}.

%
%
\section{Screw lines} \label{section_2} 
%
%

\subsection{} 

Following Masani~\cite{Ma72} and Ravaska~\cite{Ra80},
we define screw lines in a Hilbert space as follows.
The notion originates in the work of
Kolmogoroff~\cite{Ko40} and
von Neumann--Schoenberg~\cite{NeSc36,NeSc41}.
Let $\mathcal{H}$ be a complex Hilbert space 
with the inner product $\langle \cdot, \cdot \rangle$ 
and the norm $\Vert x \Vert=\sqrt{\langle x, x \rangle}$. 
The definition is also meaningful for real Hilbert spaces. 

\begin{definition} \label{def_201}
A continuous mapping $x(\cdot):\R \to \mathcal{H}$ 
is called a \textbf{\textit{screw line}} in $\mathcal{H}$ if the inner product 
\[
\langle x(b)-x(a),\,x(d)-x(c) \rangle
\]
is translation-invariant for all $a,b,c,d \in \R$, that is, 
\[
\langle x(b+t)-x(a+t),\,x(d+t)-x(c+t) \rangle
= \langle x(b)-x(a),\,x(d)-x(c) \rangle
\]
for all $t,a,b,c,d \in \R$. 
\end{definition}

If $x(\cdot)$ is a screw line,
then the chord length
$\Vert x(t)-x(s)\Vert$
depends only on $|t-s|$. 
The converse holds if $\mathcal{H}$ is a real Hilbert space, because
\[
\aligned 
\langle x(t+u)&-x(u) ,x(s+u)-x(u) \rangle \\
& = \frac{1}{2}\Vert x(t+u)-x(u) \Vert^2 +\frac{1}{2} \Vert x(s+u)-x(u) \Vert^2 \\
& \quad - \frac{1}{2} \Vert x(t+u)-x(s+u) \Vert^2  \\
&= \langle x(t)-x(0) ,x(s)-x(0) \rangle.
\endaligned 
\]
It is also easy to verify that 
$x(\cdot):\R \to \mathcal{H}$ is a screw line 
in the sense of Definition \ref{def_201} if and only if 
the inner product $\langle x(a+t)-x(t),\,x(b+t)-x(t) \rangle$ 
is independent of $t \in \R$ for all $a,b \in \R$. 
Kre\u{\i}n~\cite{Kr44}
generalized the notion of screw lines
by introducing screw arcs as follows.

\begin{definition} \label{def_202}
Let $I$ be an interval in $\R$. 
A continuous function $x(\cdot):I \to \mathcal{H}$ 
is called a \textbf{\textit{screw arc}} in $\mathcal{H}$ if 
\begin{equation} \label{eq_201}
B_x(t,s):=\langle x(t+u)-x(u),\,x(s+u) -x(u) \rangle
\end{equation}
is independent of $u \in I$ for all $t,s \in \R$ 
such that $t+u,s+u \in I$. 
We call the screw arc for $I=\R$ the \textbf{\textit{screw line}}.
\end{definition} 

The quantity $B_x(t,s)$ 
plays a central role in the subsequent theory.
Many analytic objects discussed later
can be recovered from this kernel.
See Kre\u{\i}n--Langer~\cite[Section 12]{KrLa14} for a brief description of screw arcs.  

\subsection{} 

The mapping \eqref{eq_101}
is a screw line
in the sense of Definition \ref{def_202},
since we obtain
\begin{equation} \label{eq_202}
B_{x_0}(t,s)=\cos(t-s)-\cos(t)-\cos(s) + 1 +st 
\end{equation}
by calculating the right-hand side of \eqref{eq_201} 
with respect to the Euclidean inner product. 

%
%
\section{Screw functions}
%
%

\subsection{} 

For $0<a \leq \infty$, 
following Kre\u{\i}n~\cite{Kr59} and Kre\u{\i}n--Langer~\cite{KrLa77}, 
we denote by $\mathcal{G}_a$ 
the set of all continuous functions $g$ on the interval $(-2a,2a)$ 
such that 
\[
g(-t) = \overline{g(t)}
\]
and the kernel
\begin{equation} \label{eq_301}
G_g(t,s) = g(t-s) - g(t) - g(-s) + g(0)
\end{equation}
is nonnegative definite on $(-a,a)$, that is,  
\begin{equation} \label{eq_302}
\sum_{i,j=1}^{n} G_g(t_i,t_j) \,  \xi_i \overline{\xi_j} \,\geq\, 0
\end{equation}
for all $n \in \N$, $\xi_i \in \C$, and $|t_i| < a$, $(i = 1, 2, . . . , n)$. 
In literature, a kernel satisfying \eqref{eq_302} is often referred 
to as a positive definite kernel or semi-positive definite kernel. 
However, in this note, we refer to such a kernel as a nonnegative definite kernel.
\medskip

The kernel $B_x(t,s)$ introduced in Section~\ref{section_2}
admits an intrinsic characterization in terms of positive-definite kernels.
The complex valued function $B(t,s)$ on $(-a,a)\times(-a,a)$, $0< a \leq \infty$, 
is equal to $B_x(t,s)$ of \eqref{eq_201} for some screw arc $x(\cdot)$ on $(-a,a)$ 
if and only if 
\[
B(t,s) = G_g(t,s), 
\quad s,t \in (-a,a)
\]
holds for some  $g \in \mathcal{G}_a$ satisfying $g(0) = 0$. 
(Such $g$ can be chosen so that $c=0$ in \eqref{eq_401} below.) 
Then it is uniquely determined 
by the screw arc $x$, 
and it determines this screw arc up to unitary equivalence. 
The proof of this fact is sketched in \cite[\S12.1]{KrLa14}. 
Motivated by this correspondence, 
we adopt the following definition of screw functions from \cite{KrLa14}.

\begin{definition} \label{def_301}
Let $0<a \leq \infty$. We call a member of $\mathcal{G}_a$ 
a \textbf{\textit{screw function}} on $(-2a,2a)$. 
Further, we call $x(\cdot): (-a,a) \to \mathcal{H}$ 
a screw line of a screw function $g \in \mathcal{G}_a$ if 
\[
G_g(t,s) = B_x(t,s)
\]
holds for $t,s \in (-a, a)$. 
\end{definition}

Hereinafter, the term ``screw function'' will be used in the sense of Definition \ref{def_301}. 

For a screw line $x(\cdot)$, 
we call 
\[
F_x(t) = \Vert x(t) - x(0) \Vert
\] 
the chordal length-function of $x$. 
In von Neumann--Schoenberg \cite{NeSc36,NeSc41},
a function $F$ is called a screw function
if it arises as the chordal length-function $F_x$ 
of a screw line $x$. 
In the case of real Hilbert spaces studied in \cite{NeSc36, NeSc41}, 
the screw functions in Definition \ref{def_301} are essentially the same 
as the chordal length-functions as follows. 

We have $F_x(t)=\sqrt{B_x(t,t)}$ by definition. 
If $x$ is a screw line in a real Hilbert space, 
then 
\begin{equation*} 
\aligned 
B_x(t,s) 
& = \frac{1}{2}B_x(t,t)+\frac{1}{2}B_x(s,s)-\frac{1}{2}B_x(t-s,t-s) \\
& =  \frac{1}{2}F_x(t)^2+\frac{1}{2}F_x(s)^2-\frac{1}{2}F_x(t-s)^2. 
\endaligned 
\end{equation*}
Therefore, $B_x(t,s)=G_g(t,s)$ if we set $g(t):= -F_x(t)^2/2$. 
Moreover, this $G_g(t,s)$ is nonnegative definite as noted in \cite[(1.3)]{NeSc41}. 
Hence, $g(t)= -F_x(t)^2/2$ is a screw function in the sense of Definition \ref{def_301}. 
\medskip

If $g \in \mathcal{G}_a$ and $0<b<a$, 
the restriction $g|_{(-2b,2b)}$ belongs to $\mathcal{G}_b$. 
Conversely, for $0<a<\infty$, each $g \in \mathcal{G}_a$ admits 
at least one continuation $\widetilde{g} \in \mathcal{G}_\infty$ 
\cite[Corollary 5.2]{KrLa14}. 
Moreover, a complete description of 
all continuations $\widetilde{g} \in \mathcal{G}_\infty$ 
for a given $g \in \mathcal{G}_a$ 
is known \cite[Theorem 5.4]{KrLa14}. 

\subsection{}  We define the function $g_0(t)$ on $\R$ by 
\begin{equation} \label{eq_303}
g_0(t)=-\frac{t^2}{2}+\cos(t)-1.
\end{equation}
Then, it is an even function and 
\[
\aligned 
G_{g_0}(t,s) 
& = \cos(t-s)-\cos(t)-\cos(s)+1+st \\
& = (\cos(t)-1)(\cos(s)-1)+\sin(t)\sin(s) + st. 
\endaligned 
\]
This kernel is nonnegative definite on $\R$, because 
\[
\aligned
\sum_{i,j=1}^{n} G_{g_0}(t_i,t_j) \,  \xi_i \overline{\xi_j}
&= \sum_{i,j=1}^{n}(\cos(t_i)-1)(\cos(t_j)-1) \,  \xi_i \overline{\xi_j} \\
& \quad + \sum_{i,j=1}^{n}\sin(t_i)\sin(t_j)\,  \xi_i \overline{\xi_j} 
+ \sum_{i,j=1}^{n}t_it_j \,  \xi_i \overline{\xi_j} \\
&= \left|\sum_{i=1}^{n}(\cos(t_i)-1) \,  \xi_i \right|^2
+ \left|\sum_{i=1}^{n} \sin(t_i) \,  \xi_i \right|^2
+ \left|\sum_{i=1}^{n} t_i \,  \xi_i \right|^2 \geq 0
\endaligned
\]
for any $t_i,t_j \in \R$ and $\xi_i,\xi_j \in \C$. 
Hence, $g_0(t)$ is a screw function on $\R$ 
in the sense of Definition \ref{def_301}. 
In addition, $g_0(t)$ is associated with the screw line \eqref{eq_101}, 
since 
\[
B_{x_0}(t,s)=G_{g_0}(t,s)
\]
 by \eqref{eq_202}. 

%
%
\section{$L^2(\tau)$: $L^2$ spaces for spectral measures $\tau$} 
%
%

\subsection{} 

The following theorem provides an integral representation for screw functions 
and introduces their spectral measures.

\begin{theorem} \label{thm_301}
Let $0<a \leq \infty$. 
The continuous function $g$ on $(-2a,2a)$ belongs to the class $\mathcal{G}_a$ 
if and only if it admits a representation
\begin{equation} \label{eq_401}
g(t)=g(0)+ic\, t + \int_{-\infty}^{\infty}\left(
e^{it\gamma}-1-\frac{it\gamma}{1+\gamma^2}
\right) \frac{d\tau(\gamma)}{\gamma^2}, \quad |t| < 2a
\end{equation}
with $c \in \R$ and a nonnegative measure $\tau$ such that
\begin{equation} \label{eq_402}
\int_{-\infty}^{\infty} \frac{d\tau(\gamma)}{1+\gamma^2} < \infty.
\end{equation}
For a given $g \in \mathcal{G}_a$, 
the measure $\tau$ in \eqref{eq_401} need not be unique. 
\end{theorem}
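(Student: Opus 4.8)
The plan is to reduce the statement to a classical Lévy–Khinchin-type representation, exploiting the fact that $G_g(t,s)$ being nonnegative definite is essentially the condition that a certain function is (conditionally) positive definite. First I would treat the ``if'' direction, which is the routine half: given the representation \eqref{eq_401} with $c\in\R$ and $\tau$ satisfying \eqref{eq_402}, one computes directly that
\[
G_g(t,s) = \int_{-\infty}^{\infty}
\bigl(e^{it\gamma}-1\bigr)\overline{\bigl(e^{is\gamma}-1\bigr)}\,\frac{d\tau(\gamma)}{\gamma^2},
\]
the linear term $ict$ and the compensating term $it\gamma/(1+\gamma^2)$ cancelling out of the kernel because of the $-g(t)-g(-s)+g(0)$ structure. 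Since the integrand is a nonnegative-definite kernel in $(t,s)$ for each fixed $\gamma$ (it is $h(t)\overline{h(s)}$ with $h(t)=e^{it\gamma}-1$) and $\tau\geq 0$, the integral is nonnegative definite; the integrability near $\gamma=0$ is ensured by $(e^{it\gamma}-1)/\gamma$ being bounded there, and near $\gamma=\infty$ by \eqref{eq_402}. Continuity of $g$ on $(-2a,2a)$ follows from dominated convergence. One also checks $g(-t)=\overline{g(t)}$ from the form of the integrand together with $c\in\R$ and $\tau$ being a (real, nonnegative) measure.

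For the ``only if'' direction I would argue as follows. Given $g\in\mathcal{G}_a$, set
\[
\varphi(t) := g(t) + g(-t) - 2g(0) = G_g(t,t) - \bigl(2g(0) - g(t) - g(-t)\bigr)\ \text{corrected to}\ \varphi(t):=-\bigl(g(t)-g(0)\bigr)-\bigl(g(-t)-g(0)\bigr),
\]
more usefully, consider that nonnegative-definiteness of $G_g(t,s)=g(t-s)-g(t)-g(-s)+g(0)$ on $(-a,a)$ is precisely the statement that $t\mapsto g(t)-g(0)$ is a \emph{conditionally} (negative, up to sign) definite function: for any finite $\{t_i\}\subset(-a,a)$ and $\{\xi_i\}\subset\C$ with $\sum\xi_i=0$ one gets $\sum_{i,j}\bigl(g(t_i-t_j)-g(0)\bigr)\xi_i\overline{\xi_j}\le 0$, because the extra terms $-g(t_i)-g(-t_j)$ drop out. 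This is the Schoenberg/Lévy–Khinchin circle of ideas. The standard route is then: (i) by the continuation result cited just before this subsection (\cite[Corollary 5.2]{KrLa14}) reduce to $a=\infty$; (ii) apply the integral representation theorem for continuous conditionally negative-definite functions on $\R$ (equivalently, the Lévy–Khinchin formula, since $e^{t(g(\cdot)-g(0))}$-type considerations link these to infinitely divisible characteristic functions — a connection the paper itself flags as topic (9)), which yields exactly a representation of the shape \eqref{eq_401} with a real linear coefficient, a possible Gaussian $\gamma^2$-type atom absorbed into $\tau$ at the origin, and a Lévy measure $d\tau(\gamma)/\gamma^2$; (iii) translate the integrability condition of that classical theorem into \eqref{eq_402}. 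The truncation term $it\gamma/(1+\gamma^2)$ is the usual centering needed to make the integral converge at both ends simultaneously, and the freedom in choosing it (any other centering differs by a term linear in $t$) is exactly what produces the non-uniqueness of $(c,\tau)$ asserted in the last sentence; I would note that an atom of $\tau$ at $\gamma=0$ contributes $-\tfrac{t^2}{2}\,\tau(\{0\})$, which is why the $-t^2/2$ in $g_0$ corresponds to a point mass at the origin.

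The main obstacle is step (ii): producing the measure $\tau$. Concretely, one must show that the nonnegative-definite kernel $G_g(t,s)$, which depends only on the three variables $t-s,\,t,\,s$ through $g$, comes from integrating the rank-one kernels $(e^{it\gamma}-1)\overline{(e^{is\gamma}-1)}$ against a positive measure. The cleanest self-contained argument is a Bochner-type one: fix a large $T<a$, restrict to $t,s\in(-T,T)$, and apply Bochner's theorem (or its conditional-positive-definiteness refinement) to get measures $\tau_T$ on the parameter space; then establish a uniform tightness/mass bound (this is where \eqref{eq_402} must be extracted, via testing $G_g$ against suitable smooth compactly supported $\xi$) and pass to a weak-$*$ limit as $T\to\infty$, using the $a=\infty$ continuation to make sense of this. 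Verifying that the limiting measure is independent of the approximation scheme up to the linear-term ambiguity, and that no mass escapes to infinity, is the delicate point; everything else is bookkeeping with the centering term. Since the paper explicitly says it does not reprove known results, I would at this point simply cite \cite{Kr59,KrLa77} (and \cite{KrLa14}) for the full argument and record the computation of the ``if'' direction together with the remark on non-uniqueness as the only things worth spelling out.
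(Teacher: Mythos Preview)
The paper's own ``proof'' is a single line: \emph{See \cite[Theorem 5.1]{KrLa14}}. So your proposal already exceeds what the paper does; your closing remark that one should simply cite \cite{Kr59,KrLa77,KrLa14} is in fact exactly the paper's approach.

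Your sketch of an actual argument is in the correct framework. The ``if'' direction is just the computation you wrote down (this is \eqref{eq_502} in the paper), and the ``only if'' direction via conditional negative-definiteness and the L\'evy--Khinchin representation is the standard route, which the paper itself alludes to in the appendix on infinitely divisible distributions. One cosmetic point: your definition of $\varphi(t)$ is garbled mid-sentence (you visibly correct yourself in-line); in a final version just state directly that nonnegative-definiteness of $G_g$ is equivalent to $\sum_{i,j}(g(t_i-t_j)-g(0))\xi_i\overline{\xi_j}\ge 0$ whenever $\sum\xi_i=0$, i.e.\ conditional positive-definiteness of $g-g(0)$, and proceed from there. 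Also, your step (i) invoking the continuation result to reduce to $a=\infty$ is slightly circular as stated, since in \cite{KrLa14} that continuation corollary is itself derived from the representation theorem you are trying to prove; a cleaner ordering is to establish the representation directly on $(-2a,2a)$ (as Kre\u{\i}n does) or to cite the continuation as an independent input.
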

\begin{proof}
See \cite[Theorem 5.1]{KrLa14}. 
\end{proof}

\begin{definition} Let $g \in \mathcal{G}_a$, $0<a \leq \infty$. 
Each measure $\tau$ on $\R$ for which \eqref{eq_401} holds 
is called a \textbf{\textit{spectral measure}} of $g$. 
\end{definition}

For a given screw function $g \in \mathcal{G}_a$ with $g(0)=0$,
we can construct a screw arc satisfying
$G_g(t,s)=B_x(t,s)$
using a spectral measure of $g$.
Take a measure $\tau$ satisfying \eqref{eq_401}
and consider the Hilbert space $L^2(\tau)$.
Then the map $x:(-a,a) \to L^2(\tau)$ defined by the functions 
\[
x(t): \gamma~\mapsto~ \frac{e^{i\gamma t}-1}{\gamma}, \quad t \in (-a,a), ~\gamma \in \R
\]
is a screw arc on $(-a,a)$. In fact,  
$B_x(t,s) = \langle x(t+u) - x(u), x(s+u) - x(u) \rangle_{L^2(\tau)}$ 
is independent of $u \in (-a,a)$ as 
\[
\aligned 
B_x(t,s) 
& = \langle x(t+u) - x(u), x(s+u) - x(u) \rangle_{L^2(\tau)} \\
& = \int_{-\infty}^{\infty}
e^{i\gamma u}\frac{e^{i\gamma t}-1}{\gamma}\cdot e^{-i\gamma u}\frac{e^{-i\gamma s}-1}{\gamma} \, d\tau(\gamma)  \\ 
&= 
 \int_{-\infty}^{\infty}
\frac{e^{i\gamma t}-1}{\gamma}\,\frac{e^{-i\gamma s}-1}{\gamma} \, d\tau(\gamma)
\endaligned 
\]
for $s,t \in (-a,a)$ if $u,t+u,s+u \in (-a,a)$. 
A straightforward calculation using \eqref{eq_401} shows that $G_g(t,s)=B_x(t,s)$. 

\subsection{} For the screw function $g_0$ in \eqref{eq_303}, we have
\begin{equation*} 
\aligned 
g_0(t) 
&=-\frac{t^2}{2} + \frac{1}{2}\left(e^{it}-1-\frac{it}{2}\right)
+\frac{1}{2}\left(e^{-it}-1+\frac{it}{2} \right) \\
&= \lim_{\gamma \to 0}\left[\left(
e^{it\gamma}-1-\frac{it\gamma}{1+\gamma^2}
\right) \frac{1}{\gamma^2}\right] \\
& \quad + \frac{1}{2}\cdot \left( e^{it}-1-\frac{it}{1+1^2} \right) \frac{1}{1^2}
+ \frac{1}{2}\cdot \left( e^{-it}-1-\frac{-it}{1+(-1)^2} \right) \frac{1}{(-1)^2}. 
\endaligned 
\end{equation*}
Therefore, $g_0(t)$ can be written as \eqref{eq_401} using the measure 
\begin{equation} \label{eq_403} 
d\tau_0(\gamma) 
= \delta_0(\gamma) \, d\gamma 
+ \frac{1}{2}\, \delta_1(\gamma)\,d\gamma 
+ \frac{1}{2}\, \delta_{-1}(\gamma)\,d\gamma,
\end{equation}
where $\delta_a$ denotes the Dirac measure at $\gamma=a$ 
and $d\gamma$ is the Lebesgue measure. 
The family of functions 
\[
\R \ni t~\mapsto~x(t)=(it,e^{it}-1, -e^{-it}+1) \in L^2(\tau_0) \simeq \C^3
\]
gives a screw line $x$ satisfying $G_{g_0}(t,s)=B_x(t,s)$. 
This realization is unitarily equivalent to the original screw line \eqref{eq_101}.

%
%
\section{$\mathcal{H}(G_g)$: Hilbert spaces associated with screw functions} 
%
%

\subsection{} 

To each screw function $g$, we associate a Hilbert space
$\mathcal H(G_g)$ according to \cite[Section 5]{KrLa14}. 
Let $0<a \leq \infty$ and 
let $C_0(-a, a)$ be the space of all continuous functions on $(-a, a)$ 
which vanish in a neighborhood of the endpoints $\pm a$, that is, 
$\phi \in C_0(-a,a)$ means that there exists a compact set $K \subset (-a,a)$ 
such that $\phi(t)=0$ for $t \in (-a,a) \setminus K$. 

Let $g \in \mathcal{G}_a$ with $g(0)=0$. 
We denote by $\mathcal{L}(G_g)$ 
the subspace of $C_0(-a, a)$ consisting of all functions $\phi$ 
satisfying 
\[
\int_{-a}^{a} \phi(t) \, dt=0, 
\]
equipped with the inner product 
\[
\langle \phi_1, \phi_2 \rangle_{g,a} = \int_{-a}^{a}\int_{-a}^{a}G_g(t,s)\phi_1(s)\overline{\phi_2(t)}\,dsdt, 
\quad 
\phi_1, \phi_2 \in C_0(-a, a). 
\]
Let 
\[
\mathcal{L}^\circ(G_g)=
\{
\phi \in \mathcal{L}(G_g)~|~ \langle \phi, \phi \rangle_{g,a}=0
\}.
\]
Then, the quotient space $\mathcal{L}(G_g)/\mathcal{L}^\circ(G_g)$ forms 
a pre-Hilbert space by the norm 
\[
\Vert \phi + \mathcal{L}^\circ(G_g) \Vert_{g,a} 
:= \Vert \phi  \Vert_{g,a}. 
\]
This is well-defined 
because every
$\psi\in\mathcal L^\circ(G_g)$ 
has norm zero.
\comment{
This is well-defined, because 
\[
\Vert \phi + \psi  \Vert_{g,a} \leq \Vert \phi  \Vert_{g,a} + \Vert \psi  \Vert_{g,a}= \Vert \phi  \Vert_{g,a}
\] 
and 
\[
\Vert \phi + \psi  \Vert_{g,a} \geq |\Vert \phi  \Vert_{g,a} - \Vert - \psi  \Vert_{g,a}|= \Vert \phi  \Vert_{g,a}
\] 
for every $\psi \in  \mathcal{L}^\circ(G_g)$. 
}
(For a closed subspace $W$ of a Banach space $V$, 
the quotient norm is defined by $\Vert v + W \Vert_{V/W}:=\inf_{w \in W}\Vert v +w\Vert_V$.)

\begin{definition} 
For $g \in \mathcal{G}_a$, we let $\mathcal{H}(G_g)$ 
be the Hilbert space obtained by completing 
the pre-Hilbert space $\mathcal{L}(G_g)/\mathcal{L}^\circ(G_g)$.
\end{definition}
The linear space $\mathcal{L}^2(-a,a):=L^2(-a,a)/(\overline{L^2(-a,a)\cap \mathcal{L}^\circ(G_g)})$ 
is a (not necessarily closed) subspace of $\mathcal{H}(G_g)$, 
at least if $a$ is finite, since $G_g(t,s)$ is a Hilbert--Schmidt kernel on $(-a,a)$. 
Moreover, we see that $\mathcal{H}(G_g)$ is actually wider than $\mathcal{L}^2(-a,a)$.
Let $\{\delta_{\alpha,n}(t)\}_n \subset C_0(-a,a)$ 
be a sequence of continuous functions 
which converges to the Dirac distribution $\delta_\alpha(t)$ at $t=\alpha$ 
in the sense of distributions. 
Then, the sequence $\{\delta_{\alpha,n}(t)\}_n$ 
is a Cauchy sequence in $\mathcal{L}(G_g)$ 
by the continuity of $G_g(t,s)$.  
Hence, $\delta_\alpha(t)$ can be considered as an element of  $\mathcal{H}(G_g)$.  

Let $g \in \mathcal{G}_a$ and take a spectral measure $\tau$ of $g$ 
(cf. Theorem \ref{thm_301}). 
Then, the transform 
\[
\phi~\mapsto~\Phi_1(\phi,z):=\int_{-a}^{a} \phi(t) \frac{e^{izt} - 1}{z} \, dt
\]
defines a linear map from $\mathcal{L}(G_g)$ to $L^2(\tau)$ and satisfies 
\begin{equation} \label{eq_501}
\langle \phi_1,\phi_2 \rangle_{g,a}
= \int_{-\infty}^{\infty} \Phi_1(\phi_1,\gamma) \overline{\Phi_1(\phi_2,\gamma)} \, d\tau(\gamma), 
\quad \phi_1,\phi_2 \in \mathcal{L}(G_g).
\end{equation}
Therefore, the induced map $\Phi_1:\mathcal{H}(G_g) \to L^2(\tau)$ 
is an isometric embedding of a Hilbert space. 
The relation \eqref{eq_501} is equivalent to the representation 
\begin{equation} \label{eq_502}
G_g(t,s) = \int_{-\infty}^{\infty} 
\frac{e^{i\gamma t}-1}{\gamma}\,\frac{e^{-i\gamma s}-1}{\gamma} \, d\tau(\gamma).
\end{equation}
It is often useful to express the transform $\Phi_1$ as 
\[
\Phi_1(\phi,z) = \frac{\widehat{\phi}(z)-\widehat{\phi}(0)}{z}
\]
using the Fourier transform
\begin{equation} \label{eq_503}
\widehat{\phi}(z):=(\mathsf{F}\phi)(z):=\int_{-\infty}^{\infty} \phi(t) \, e^{izt} \, dt. 
\end{equation}
If $\phi \in \mathcal{L}(G_g)$, 
we have $\Phi_1(\phi,z)=\widehat{\phi}(z)/z$ by $\widehat{\phi}(0)=0$.  

\subsection{} 

Let $\mathfrak{D}(\mathsf{A})=\{ \phi \in \mathcal{L}(G_g)\,|\, \phi' \in \mathcal{L}(G_g)\}$ 
and define the linear operator $\mathsf{A}$ on $\mathcal{L}(G_g)$ by 
$\mathsf{A}\phi=i\phi'$ for $\phi \in \mathfrak{D}(\mathsf{A})$. 
The mapping $\Phi_1:\mathcal{L}(G_g)\times \R \to \C;$ 
$(\phi,\gamma) \mapsto \Phi_1(\phi,\gamma)$ 
is the directing functional of the operator $\mathsf{A}$.  
In particular, 
\[
\Phi_1(\mathsf{A}\phi,z) = z\, \Phi_1(\phi,z), \quad \phi \in \mathfrak{D}(\mathsf{A}).
\]
See \cite[Section 8]{GoGo97} and \cite[Section 1]{Wo17} for the theory of directing functionals.  
In $\mathcal{H}(G_g)$, 
$\mathsf{A}$ generates a densely defined symmetric operator, 
which we denote again by $\mathsf{A}$. 
There is a bijection between all minimal self-adjoint extensions 
$\widetilde{\mathsf{A}}$ of $\mathsf{A}$ in $\mathcal{H}(G_g)$ 
or in some larger Hilbert space $\widetilde{\mathcal{H}}$ 
and all measures $\widetilde{\tau}$ on 
$\R$ such that \eqref{eq_501} holds. 
The self-adjoint extension $\widetilde{\mathsf{A}}$ of $\mathsf{A}$, 
corresponding to $\widetilde{\tau}$, is orthogonal, 
that is $\widetilde{\mathcal{H}}=\mathcal{H}(G_g)$, 
if and only if the set of all functions 
$\Phi_1(\phi,\cdot)$, $\phi \in \mathcal{L}(G_g)$ 
is dense in $L^2(\widetilde{\tau})$, 
which is equivalent to the completeness of 
the set of all functions
\[
\gamma~\mapsto~\frac{e^{it\gamma}-1}{\gamma}, 
\quad \gamma \in \R, \quad |t|<a
\]
in $L^2(\widetilde{\tau})$. 

\subsection{} \label{section_5_3}

In the case of $g_0 \in \mathcal{G}_\infty$ in \eqref{eq_303}, 
\[
\Phi_1(\phi,0) 
= \lim_{\gamma \to 0} \int_{-\infty}^{\infty} \phi(t) \, \frac{e^{i\gamma t}-1}{\gamma} \,dt 
=  \int_{-\infty}^{\infty} it\,\phi(t) \, dt = (\widehat{\phi})'(0)
\]
\[
\Phi_1(\phi,\pm 1) 
= \pm \int_{-\infty}^{\infty} \phi(t) \, e^{\pm it} \,dt 
= \pm \widehat{\phi}(\pm 1)
\]
and 
\begin{equation} \label{eq_504}
\aligned 
\langle \phi,\phi \rangle_{g_0,\infty}
& = \int_{-\infty}^{\infty} 
\left| \Phi_1(\phi,\gamma) \right|^2 \, d\tau_0(\gamma)
 = \int_{-\infty}^{\infty} 
\left| \frac{\widehat{\phi}(\gamma)-\widehat{\phi}(0)}{\gamma} \right|^2 \, d\tau_0(\gamma) \\
& = 
\left| \lim_{\gamma \to 0}\frac{\widehat{\phi}(\gamma)-0}{\gamma} \right|^2 
+ \frac{1}{2} 
\left| \frac{\widehat{\phi}(1)-0}{1} \right|^2 
+ \frac{1}{2} 
\left| \frac{\widehat{\phi}(-1)-0}{-1} \right|^2
\endaligned 
\end{equation}
for $\phi \in \mathcal{L}(G_{g_0})$ by \eqref{eq_403}. 
Therefore, $\mathcal{H}(G_{g_0})$ is a three-dimensional space 
$\mathcal{L}(G_g)/{\rm Ker}(\Phi_1)$ (without completion) 
consisting of classes of $\phi$ such that 
\[
(\Phi_1(\phi,0),\Phi_1(\phi,1),\Phi_1(\phi,-1))=
((\widehat{\phi})'(0),\widehat{\phi}(1),\widehat{\phi}(-1))\not=(0,0,0).
\] 
Moreover, $\phi \mapsto \Phi_1(\phi,\cdot)$ provides an isometric isomorphism 
from $\mathcal{H}(G_{g_0})$ to $L^2(\tau_0)$ 
by \eqref{eq_403} and \eqref{eq_504}. 
Hence, the corresponding self-adjoint extension $\widetilde{\mathsf{A}}$ 
is orthogonal in $\mathcal{H}(G_{g_0})$ 
and has eigenvalues $0$, $\pm 1$. 

%
%
\section{Nevanlinna class}
%
%

\subsection{} 

In order to describe the Fourier transforms of screw functions, 
we recall the Nevanlinna class.

\begin{definition} 
We denote by $\mathcal{N}$ 
the class of all holomorphic functions on the upper half-plane $\C_+=\{z\,|\, \Im(z)>0\}$ 
which map $\C_+$ into $\C_+ \cup \R$. 
This class is called the \textbf{\textit{Nevanlinna class}}. 
\end{definition}

A function $Q(z)$ on $\C_+$ belongs to $\mathcal{N}$ if and only if 
it admits the formula 
\begin{equation} \label{eq_601}
Q(z) = az + b + \int_{-\infty}^{\infty}\left( \frac{1}{\gamma-z} - \frac{\gamma}{1+\gamma^2} 
\right) d\tau(\gamma)
\end{equation}
for $a \in \R_{\geq 0}$, $b \in \R$, and a measure $\tau$ on $\R$ satisfying 
\eqref{eq_402}. 
The measure $\tau$ is called a \textit{\textbf{spectral measure}} of $Q$. 
Moreover, 
\[
a = \lim_{z \to \infty} \frac{Q(z)}{z}, \qquad b = \Re(Q(i)).
\]

Kre\u{\i}n--Langer \cite[Satz 5.9]{KrLa77} 
proved the following relation 
between screw functions in $\mathcal{G}_\infty$ and functions of $\mathcal{N}$. 

\begin{theorem} \label{thm_6_1}
The equality 
\begin{equation} \label{eq_602}
\int_{0}^{\infty} g(t)\,e^{izt} \, dt = -\frac{i}{z^2}Q(z), \quad \Im(z) > h
\end{equation}
for some $h \geq 0$, depending on $g$ or $Q$,  
establishes a bijective correspondence 
between all functions $g \in \mathcal{G}_\infty$ with $g(0)=0$ 
and all functions $Q \in \mathcal{N}$ with the property 
\begin{equation} \label{eq_603}
\lim_{y \to +\infty} \frac{Q(iy)}{y} =0.
\end{equation}
\end{theorem}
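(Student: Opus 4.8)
The plan is to establish the bijection by a direct computation of the Laplace transform on the left-hand side of \eqref{eq_602}, matching the integral representation \eqref{eq_401} of $g\in\mathcal{G}_\infty$ (Theorem \ref{thm_301}) with the Nevanlinna representation \eqref{eq_601} of $Q\in\mathcal{N}$.

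I would first treat the direction $g\mapsto Q$. Given $g\in\mathcal{G}_\infty$ with $g(0)=0$, pick a spectral measure $\tau$, so that $g(t)=ic\,t+\int_{-\infty}^{\infty}(e^{it\gamma}-1-it\gamma/(1+\gamma^2))\,d\tau(\gamma)/\gamma^2$ for all $t\in\R$. Splitting the $\gamma$-integral at $|\gamma|=1$, a second-order Taylor estimate of the integrand near $\gamma=0$ together with the bound $|e^{it\gamma}-1|\le 2$ for $|\gamma|\ge 1$ and \eqref{eq_402} gives $|g(t)|=O(1+t^2)$; hence the integral in \eqref{eq_602} converges absolutely for $\Im z>0$ and Fubini's theorem applies, so that in fact $h=0$ works. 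I would then insert the elementary identities $\int_0^\infty e^{iwt}\,dt=i/w$ for $\Im w>0$ and $\int_0^\infty t\,e^{izt}\,dt=-1/z^2$, collect terms, and simplify with the partial-fraction identity $z/(\gamma(z+\gamma))=1/\gamma-1/(z+\gamma)$ and the substitution $\gamma\mapsto-\gamma$. The outcome is
\[
\int_0^\infty g(t)\,e^{izt}\,dt=-\frac{i}{z^2}\,Q(z),\qquad Q(z):=c+\int_{-\infty}^{\infty}\Bigl(\frac{1}{\gamma-z}-\frac{\gamma}{1+\gamma^2}\Bigr)d\tilde\tau(\gamma),
\]
where $\tilde\tau$ denotes the image of $\tau$ under $\gamma\mapsto-\gamma$. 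Since $\tilde\tau$ again satisfies \eqref{eq_402}, this $Q$ is of the form \eqref{eq_601} with $a=0$ and $b=c$, hence $Q\in\mathcal{N}$; and $a=0$ is precisely \eqref{eq_603}, in view of the identity $a=\lim_{z\to\infty}Q(z)/z$. The assignment $g\mapsto Q$ is well defined, as the left-hand side of \eqref{eq_602} depends on $g$ alone.

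For the converse I would start from $Q\in\mathcal{N}$ satisfying \eqref{eq_603}. From $a=\lim_{z\to\infty}Q(z)/z$ one has $\lim_{y\to+\infty}Q(iy)/y=ia$, so \eqref{eq_603} forces $a=0$; writing $Q(z)=b+\int_{-\infty}^{\infty}(1/(\gamma-z)-\gamma/(1+\gamma^2))\,d\sigma(\gamma)$ with $\sigma$ the (unique) spectral measure of $Q$, set $c:=b$ and let $\tau$ be the image of $\sigma$ under $\gamma\mapsto-\gamma$. Taking $g$ to be the function \eqref{eq_401} attached to this pair $(c,\tau)$ with $g(0)=0$, Theorem \ref{thm_301} gives $g\in\mathcal{G}_\infty$, and the computation above --- now with $\tilde\tau=\sigma$ --- yields exactly \eqref{eq_602} for the pair $(g,Q)$. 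Thus $g\mapsto Q$ maps onto the set of $Q\in\mathcal{N}$ obeying \eqref{eq_603}. Injectivity is immediate: if $g_1,g_2$ both satisfy \eqref{eq_602} with the same $Q$, then the Laplace transform of $g_1-g_2$ on $[0,\infty)$ vanishes for $\Im z$ large, so $g_1=g_2$ on $[0,\infty)$, hence on all of $\R$ since $g_j(-t)=\overline{g_j(t)}$.

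I expect the only genuine technical point to be the Fubini step --- the growth bound $|g(t)|=O(1+t^2)$ and the consequent absolute convergence of $\int_0^\infty\!\int_{-\infty}^{\infty}|\cdots|\,d|\tau|(\gamma)\,e^{-(\Im z)t}\,dt$ --- while everything else is bookkeeping of the linear and constant parts (the parameters $a$ and $b$) and of the single reflection $\gamma\mapsto-\gamma$ relating the spectral measure of $g$ to that of $Q$. Alternatively, one can avoid the growth estimate by verifying \eqref{eq_602} first for the two building blocks $t$ and $e^{it\gamma}-1-it\gamma/(1+\gamma^2)$ separately and then integrating in $\gamma$, which is the route taken in \cite{KrLa77}.
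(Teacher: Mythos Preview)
The paper does not give its own proof of this theorem: it simply attributes the result to Kre\u{\i}n--Langer \cite[Satz 5.9]{KrLa77}, consistent with the expository nature of the article. Your proposal is correct and is precisely the standard argument---computing the Laplace transform termwise from the integral representation \eqref{eq_401} and recognising the outcome as the Nevanlinna representation \eqref{eq_601} with $a=0$---which is also the route taken in the cited source; the growth bound $|g(t)|=O(1+t^2)$ and the ensuing Fubini justification are the only points requiring care, and you have handled them.
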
 

\subsection{} 

For the screw function of \eqref{eq_303}, we have
\[
\int_{0}^{\infty} g_0(t) e^{izt} \, dt = - \frac{i}{z^2} Q_0(z),\quad \Im(z)>0 
\]
with 
\begin{equation} \label{eq_604}
Q_0(z) = \frac{1-2z^2}{z(z-1)(z+1)}. 
\end{equation}
It is easy to verify that $Q_0(z)$ belongs to
the Nevanlinna class $\mathcal{N}$
from the partial fraction decomposition 
\[
Q_0(z) = -\frac{\!\!\bar{z}}{|z|^2} - \frac{\bar{z}-1}{2|z-1|^2} - \frac{\bar{z}+1}{2|z+1|^2}.
\]
The condition \eqref{eq_603} is clearly satisfied by $Q_0(z)$.
 
%
%
\section{Meromorphic inner functions and the Hermite--Biehler class} 
%
%

\subsection{} 

Each $Q \in \mathcal{N}$ extends to $\C_-$ via the relation $Q(\bar{z})=\overline{Q(z)}$, 
but its behavior on $\R$ is undefined. 
A complex-valued function is said to be {\it real} 
if it maps real numbers to real numbers on its domain. 
Following this terminology, we say that $Q \in \mathcal{N}$ is 
{\it real meromorphic} 
if $Q$ is a real meromorphic function on $\C$. 
If $Q \in \mathcal{N}$ is real meromorphic, 
it is related to a meromorphic inner function defined as follows.

\begin{definition}
A bounded holomorphic function $F$ on $\C_+$ 
such that 
\[
\lim_{y \to 0+}|F(x+iy)|=1
\] 
for almost all $x \in \R$ is called an \textbf{\textit{inner function}} in $\C_+$. 
If a meromorphic function on $\C$ is an inner function in $\C_+$, 
it is called a \textbf{\textit{meromorphic inner function}} (MIF, for short) in $\C_+$. 
We denote by $\mathcal{MI}$ the class of all meromorphic inner functions in $\C_+$. 
\end{definition}

For a real meromorphic $Q \in \mathcal{N}$, the linear fractional transformation 
\begin{equation} \label{eq_701}
\Theta(z) = \frac{i-Q(z)}{i+Q(z)}
\end{equation}
defines a meromorphic inner function in $\C_+$. 
Conversely, for a meromorphic inner function $\Theta$, 
the inverse linear fractional transformation 
\begin{equation} \label{eq_702}
Q(z) = i\, \frac{1-\Theta(z)}{1+\Theta(z)}
\end{equation}
defines a real meromorphic function in $\mathcal{N}$. 
Let $\mu_\Theta$ be the measure on $\R$ supported on the level set 
\[
\sigma(\Theta) = \{z \in \R ~|~\Theta(z)=-1\} 
\]
such that the point masses at $\gamma$ in the level set are 
\begin{equation} \label{eq_703}
\mu_\Theta(\gamma) = \frac{2\pi}{|\Theta'(\gamma)|}.
\end{equation}
Then,  
\[
Q(z) = a z + b + \int_{-\infty}^{\infty}\left( \frac{1}{\gamma-z} - \frac{\gamma}{1+\gamma^2} 
\right) \frac{d\mu_\Theta(\gamma)}{\pi}
\]
if $Q$ and $\Theta$ are related as \eqref{eq_701} or \eqref{eq_702}. 
Comparing with \eqref{eq_601}, 
\begin{equation} \label{eq_704}
d\tau(\gamma) = \frac{d\mu_\Theta(\gamma)}{\pi}. 
\end{equation}
Meromorphic inner functions correspond 
to entire functions of the Hermite--Biehler class 
defined as follows.  

\begin{definition} 
The \textbf{\textit{Hermite--Biehler class}}  
is the class of all entire functions $E$ satisfying 
\[
|E^\sharp(z)|<|E(z)| \quad \text{for $z \in \C_+$}
\]
and having NO real zeros, where $E^\sharp(z)=\overline{E(\bar{z})}$. 
We denote by $\mathcal{HB}$ the Hermite--Biehler class. 
\end{definition}

Although some literature allows real zeros for members of $\mathcal{HB}$, 
this note adopts the above definition. 
For an entire function $E \in \mathcal{HB}$, the quotient 
\begin{equation} \label{eq_705}
\Theta(z) = \frac{E^\sharp(z)}{E(z)}
\end{equation}
defines a meromorphic inner function. 
Conversely, if $\Theta(z)$ is a meromorphic inner function, 
there exists $E \in \mathcal{HB}$ 
such that \eqref{eq_705} holds \cite[\S2.3 and \S2.4]{HaMa03}. 
\medskip

As described above, 
the class of real meromorphic functions in $\mathcal{N}$, 
the class $\mathcal{MI}$ of meromorphic inner functions, 
and the class $\mathcal{HB}$ are in natural correspondence.

\subsection{} \label{section_7_2}
The function $Q_0$ in \eqref{eq_604} is a real meromorphic function.
The meromorphic inner function corresponding to $Q_0$ is 
\[
\Theta_0(z) = \frac{i-Q_0(z)}{i+Q_0(z)}= \frac{z^3-2iz^2-z+i}{z^3+2iz^2-z-i}. 
\]  
This can be written as $\Theta_0(z)=E_0^\sharp(z)/E_0(z)$ for
\begin{equation} \label{eq_706}
E_0(z):=z^3+2iz^2-z-i.
\end{equation}
We see that the polynomial $P(w):=iE_0(iw)=w^3+2w^2+w+1$ 
satisfies $P(0)=1$ and $P(-2)=-1$. 
Therefore, $P(w)$ has at least one root in the interval $(-2,0)$. 
Further, $P(w)$ has no other real roots.
Indeed,
$P'(w)=(3w+1)(w+1)$,
$P(-1)=1$,
and $P(-1/3)=23/27$.
Since both local extrema are positive,
$P(w)$ has exactly one real root. 
Hence $P(w)=(w-\alpha)(w-\beta)(w-\bar{\beta})$ 
for a real number $\alpha \in (-2,0)$ and a nonreal complex number $\beta$. 
The real part of $\beta$ is negative 
by $-2<\alpha<0$ and $2=-\alpha-(\beta+\bar{\beta})$. 
As a consequence, all roots of $E_0(z)$ lie in the lower half-plane $\C_-$: 
\[
E_0(z) = (z-\alpha_1)(z-\alpha_2)(z-\alpha_3), \quad \Im(\alpha_i)<0~(i=1,2,3). 
\]  
This factorization shows that $E_0(z)$ is an entire function in $\mathcal{HB}$, 
since $|(z-\bar{\alpha})/(z-\alpha)|<1$ for any $z \in \C_+$ 
if the imaginary part of $\alpha$ is negative.  
\medskip

From 
\begin{equation} \label{eq_707}
\frac{2\pi}{|\Theta_0^\prime(\gamma)|}
= 
\begin{cases} 
~\pi, & \gamma =0, \\
~\pi/2, & \gamma = \pm 1, \\
~0, & \text{otherwise} , 
\end{cases}
\end{equation}
together with the fact that $Q_0(i) = 3i/2$, 
we have the integral formula 
\[
Q_0(z) = \int_{-\infty}^{\infty}\left( \frac{1}{\gamma-z} - \frac{\gamma}{1+\gamma^2} 
\right) d\tau_0(\gamma)
\]
using the measure in \eqref{eq_403}.  

%
%
\section{Model subspaces and de Branges spaces} 
%
%
We review notions related to model spaces in the upper half-plane 
according to Garcia--Ross \cite[Section 10]{GaRo15}, 
Makarov--Poltoratski \cite[Section 1.2]{MaPo05}, 
and Havin--Mashreghi \cite[Section 2]{HaMa03}. 
(See also Garcia--Mashreghi--Ross \cite[Sections 3.6.3 and 5.10.4]{GMR}.) 
We also review the theory of de Branges spaces. 
The general theory of de Branges spaces is presented in the monograph \cite{dB68}, 
but proofs of many results are more accessible 
in de Branges' earlier papers \cite{MR0114002, MR0133455, dB61, MR0133457, MR0143016} 
and the thesis of Linghu \cite{Lin15b}. 
Linghu \cite{Lin15a} is also useful, as it summarizes 
the answers to some of the problems in the book \cite{dB68}. 
Winkler \cite{Win14} and Woracek \cite{Wo17}
are useful for reviewing the above theories together with related ones.
%
%
\subsection{} 
%
%

The Hardy space $H^2 = H^2(\C_+)$ for $\C_+$ 
is the space of all analytic functions $f$ on $\C_+$ endowed with the norm
\[
\Vert f \Vert_{H^2}^2 := \sup_{v>0} \int_{\R} |f(u+iv)|^2 \, du < \infty.
\] 
The Hardy space $\bar{H}^2 = H^2(\C_-)$ for the lower half-plane $\C_-$ is defined similarly. 
As usual, we identify $H^2$ and $\bar{H}^2$ with subspaces of $L^2(\R)$ 
via nontangential boundary values on the real line, 
$F \mapsto F|_{\R}$, so that $L^2(\R)=H^2 \oplus \bar{H}^2$. 
The Fourier transform \eqref{eq_503} 
provides an isometry of $L^2(\R)$ up to a constant 
such that $H^2 = {\mathsf F}L^2(0,\infty)$ and 
$\bar{H}^2 = {\mathsf F}L^2(-\infty,0)$ by the Paley-Wiener theorem. 

\begin{definition}
Let $\Theta$ be an inner function. 
The \textit{\textbf{model subspace}} (or \textit{\textbf{coinvariant subspace}}) 
$\mathcal{K}(\Theta)$ is the closed subspace of $H^2$ defined by
\begin{equation} \label{eq_801}
\mathcal{K}(\Theta)=H^2 \ominus \Theta H^2,
\end{equation}
where $\Theta H^2 = \{ \Theta(z)F(z) \, |\, F \in H^2\}$. 
\end{definition} 

The model subspace $\mathcal{K}(\Theta)$ has the alternative representation 
\begin{equation} \label{eq_802}
\mathcal{K}(\Theta) = H^2 \cap \Theta \bar{H}^2.
\end{equation}
If $\Theta$ is a meromorphic inner function, 
the restriction map $F \mapsto F|_{\sigma(\Theta)}$ 
is a unitary operator $\mathcal{K}(\Theta) \to L^2(\mu_\Theta)$ 
\cite[Theorem 2.1]{MaPo05}. 
On the other hand, for $E \in \mathcal{HB}$, the set 
\begin{equation} \label{eq_803} 
\mathcal{H}(E) := \{ f ~|~ \text{$f$ is entire, $f/E$ and $f^\sharp/E \in  H^2$} \}
\end{equation}
becomes a Hilbert space when equipped with the norm 
\[
\Vert f \Vert_{\mathcal{H}(E)} 
:= \left\Vert \frac{f}{E} \right\Vert_{H^2} 
= \left\Vert \frac{f}{E} \right\Vert_{L^2(\R)}.
\] 

\begin{definition} 
Let $E \in \mathcal{HB}$. 
The Hilbert space $\mathcal{H}(E)$ defined by \eqref{eq_803} 
is called the \textit{\textbf{de Branges space}} generated by $E$. 
If a linear subspace $V$ of $\mathcal{H}(E)$ 
can be written as $V=\mathcal{H}(\tilde{E})$ 
for some $\tilde{E} \in \mathcal{HB}$ 
and $\Vert f \Vert_{V}=\Vert f \Vert_{\mathcal{H}(E)}$ 
for every $f \in V$, 
then it is called a \textit{\textbf{de Branges subspace}} of $\mathcal{H}(E)$. 
\end{definition} 

If $\Theta$ is a meromorphic inner function such that $\Theta=E^\sharp/E$ 
for $E \in \mathcal{HB}$, 
the model subspace $\mathcal{K}(\Theta)$ is isomorphic and isometric to 
the de Branges space $\mathcal{H}(E)$ as a Hilbert space 
by $\mathcal{K}(\Theta) \to \mathcal{H}(E): \, F(z) \mapsto E(z)F(z)$. 
Therefore, $\mathcal{H}(E)$ is isometrically embedded into $L^2(\mu_\Theta)$ 
via the composition of the isometric isomorphism 
$\mathcal{K}(\Theta) \to \mathcal{H}(E)$
and the unitary restriction map $\mathcal{K}(\Theta) \to L^2(\mu_\Theta)$.

%
%
\subsection{} 
%
%

The de Branges space $\mathcal{H}(E)$ is a reproducing kernel Hilbert space 
with reproducing kernel
\begin{equation*} 
K(z,w) 
= \frac{\overline{E(z)}E(w) - \overline{E^\sharp(z)}E^\sharp(w)}{2\pi i (\bar{z} - w)} 
= \frac{\overline{A(z)}B(w) - A(w)\overline{B(z)}}{\pi (w - \bar{z})} \quad 
\quad (z,w \in \C)
\end{equation*}
(for $w = \bar{z}$ this formula has to be interpreted appropriately as a derivative), 
where 
\begin{equation} \label{eq_804}
A(z) := \frac{1}{2}(E(z)+E^\sharp(z)), \qquad  
B(z) := \frac{i}{2}(E(z)-E^\sharp(z)). 
\end{equation}
The reproducing formula 
\[
f(z)=\langle f, K(z,\cdot) \rangle_{\mathcal{H}(E)}=\langle f/E, K(z,\cdot)/E \rangle_{L^2(\R)}
\] 
for $f \in \mathcal{H}(E)$ and $z \in \C \setminus \R$ 
remains true for $z \in  \R$ if $\Theta=E^\sharp/E$ is analytic in a neighborhood of $z$. 
\medskip

If $E \in \mathcal{HB}$ is a polynomial of degree $n$, 
\[
\mathcal{H}(E) = \{ P(z) \in \C[z]\,|\, {\rm deg}\, P<n\} 
\]
as a set and $\mathcal{H}(E) \simeq \C^n$ as a vector space \cite[Example 1.11\,(i)]{Lin15b}. 
Using the moments
\[
m_k := \int_{-\infty}^{\infty} x^k \,d\mu_\Theta(x),\quad k=0,1,2,\dots,  
\]
the reproducing kernel of $\mathcal{H}(E)$ is given by 
\begin{equation} \label{eq_805}
K(z,w) = -\frac{1}{H_{n-1}} \det
\begin{bmatrix}
0 & 1 & w & \cdots & w^{n-1} \\
1 & m_0 & m_1 & \cdots & m_{n-1} \\
\bar{z} & m_1 & m_2 & \cdots & m_{n} \\
\vdots & \vdots & \vdots & & \vdots \\
\bar{z}^{n-1} & m_{n-1} & m_n & \cdots & m_{2n-2} \\
\end{bmatrix},
\end{equation}
where $H_n := \det (m_{i+j})_{i,j=0}^{n}$ \cite[Example 5]{Wo15}. 

%
%
\subsection{} 
%
%
De Branges spaces have the following axiomatic characterization. 

\begin{theorem}
A Hilbert space $\mathcal{H}$ consisting of entire functions is a de Branges space 
if and only if it satisfies the following three axioms: 
\begin{enumerate}
\item[(dB1)] For each $z \in \C\setminus \R$ the point evaluation 
$f \mapsto f(z)$ is a continuous linear functional on $\mathcal{H}$. 
\item[(dB2)] If $f \in \mathcal{H}$, $f^\sharp$ belongs to $\mathcal{H}$ 
and $\Vert f \Vert_{\mathcal{H}} = \Vert f^\sharp \Vert_{\mathcal{H}}$. 
\item[(dB3)] If $w \in \C \setminus \R$, $f \in \mathcal{H}$ and $f(w)=0$, 
\begin{equation*}
\frac{z-\bar{w}}{z-w}f(z) \in \mathcal{H} \quad \text{and} \quad 
\left\Vert \frac{z-\bar{w}}{z-w}f(z) \right\Vert_{\mathcal{H}} = \Vert f \Vert_{\mathcal{H}}.
\end{equation*}
\end{enumerate}
\end{theorem}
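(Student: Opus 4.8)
The plan is to prove both directions of the equivalence. For the necessity direction, I would start from a de Branges space $\mathcal{H}(E)$ with $E \in \mathcal{HB}$ and verify (dB1)--(dB3) directly from the definition \eqref{eq_803} and the reproducing kernel formula. Axiom (dB1) is immediate: $\mathcal{H}(E)$ is a reproducing kernel Hilbert space with kernel $K(z,w)$, so $f \mapsto f(z) = \langle f, K(z,\cdot)\rangle_{\mathcal{H}(E)}$ is continuous for $z \in \C \setminus \R$. Axiom (dB2) follows because $f \mapsto f^\sharp$ preserves membership ($f/E \in H^2$ and $f^\sharp/E \in H^2$ are symmetric conditions in the definition of $\mathcal{H}(E)$) and preserves the norm, since $\Vert f^\sharp \Vert_{\mathcal{H}(E)} = \Vert f^\sharp/E \Vert_{H^2} = \Vert f/E \Vert_{H^2}$ after unfolding the definitions and using that $|E^\sharp| = |E|$ fails but rather that the $L^2(\R)$ norms of $f/E$ and $f^\sharp/E^\sharp$ agree via conjugation; the cleaner route is to pass through the model space $\mathcal{K}(\Theta)$ with $\Theta = E^\sharp/E$, where the symmetry is manifest. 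For (dB3), if $f \in \mathcal{H}(E)$ vanishes at $w \in \C_+$, then $f(z)/(z-w)$ is still entire, and one checks that multiplying by the Blaschke-type factor $(z-\bar w)/(z-w)$ keeps $f/E$ and $f^\sharp/E$ in $H^2$ with the same norm because $|(z-\bar w)/(z-w)| = 1$ on $\R$; the case $w \in \C_-$ is handled by applying the $w \in \C_+$ case to $f^\sharp$ together with (dB2).

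For the sufficiency direction --- the substantive half --- I would follow de Branges' original argument. Suppose $\mathcal{H}$ is a Hilbert space of entire functions satisfying (dB1)--(dB3), and assume $\mathcal{H} \neq \{0\}$ (the zero space corresponds to no $E$, or one excludes it). By (dB1), for each nonreal $w$ there is a reproducing kernel $K_w \in \mathcal{H}$ with $f(w) = \langle f, K_w \rangle_{\mathcal{H}}$. The key computation is to derive the identity
\[
\langle K_w, K_z \rangle_{\mathcal{H}} = K(z,w) = \frac{\overline{E(z)}E(w) - \overline{E^\sharp(z)}E^\sharp(w)}{2\pi i(\bar z - w)}
\]
for suitable entire functions $A$, $B$ (equivalently $E$). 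The idea is to use axioms (dB2) and (dB3) to establish the functional equation
\[
(\bar w - z)K(z,w) + (z - \bar u)K(z,u)\frac{K(u,w)}{K(u,u)} \in \mathcal{H}
\]
type relations, from which one extracts that $(\bar w - z)K_w(z)$ depends on $w$ only through scalar multiples of two fixed functions. Concretely, fix a point $\beta \in \C_+$ with $K_\beta \neq 0$; using (dB3) applied to $f - (f(\beta)/K_\beta(\beta)) K_\beta$ (which vanishes at $\beta$), one shows that $K_w(z)$ has the form $[\overline{A(z)}B(w) - A(w)\overline{B(z)}]/[\pi(w - \bar z)]$ for entire functions $A, B$ that are real (i.e. $A = A^\sharp$, $B = B^\sharp$), and (dB2) forces the symmetry properties. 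Setting $E := A - iB$, one checks $E \in \mathcal{HB}$: the inequality $|E^\sharp(z)| < |E(z)|$ on $\C_+$ comes from positivity of the kernel, $K(w,w) = \langle K_w, K_w\rangle > 0$, which rearranges to exactly $|E(w)|^2 > |E^\sharp(w)|^2$; absence of real zeros of $E$ is arranged by the normalization (or noting that a real zero would make $K$ degenerate).

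The main obstacle is this reconstruction of $E$ from the reproducing kernel --- showing that the two-parameter kernel $K(z,w)$ actually factors through a single entire function, and verifying that the resulting $\mathcal{H}(E)$ coincides with $\mathcal{H}$ as a set with equal norms. The set equality requires showing every $f \in \mathcal{H}$ satisfies $f/E \in H^2$ (using (dB1) to control growth and the reproducing formula to bound $\int_\R |f/E|^2$) and conversely that the polynomials-times-$K_\beta$, or finite linear combinations of kernels, are dense in $\mathcal{H}(E)$ and land in $\mathcal{H}$. Rather than reproduce this in full, I would cite \cite[Chapter 2]{dB68} or the more accessible treatments in \cite{MR0114002, dB61} and \cite{Lin15b} for the detailed verification, and in the paper I would present the proof as a sketch indicating these references, consistent with the expository nature of the article. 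The one point worth spelling out is the equivalence with the model-space picture: given $E$, the map $F \mapsto EF$ carries $\mathcal{K}(E^\sharp/E)$ isometrically onto $\mathcal{H}(E)$, which makes axioms (dB1)--(dB3) transparent and ties the characterization back to the meromorphic inner function $\Theta = E^\sharp/E$ already discussed.
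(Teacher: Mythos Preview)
Your proposal is mathematically sound and goes well beyond what the paper actually does: the paper gives \emph{no proof} of this theorem at all. Consistent with the statement in the introduction that ``we basically do not give proofs of known results,'' the axiomatic characterization is simply asserted, followed by the remark that the generating function $E$ is not unique (since $\mathcal{H}(E) = \mathcal{H}(e^{i\theta}E)$ for any $\theta \in [0,\pi)$). There is no sketch, no citation attached to the theorem, and no indication of the argument; the references to de Branges' book and papers and to Linghu's thesis appear only in the introductory paragraph of Section~8 as general background.

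Your sketch of the sufficiency direction is an accurate outline of de Branges' original argument (Theorem~23 in \cite{dB68}), and your plan to reduce to citations is exactly what an expository paper would do --- indeed more than this paper does. One small wobble: in your verification of (dB2) you hesitate over whether $\Vert f^\sharp/E \Vert_{H^2} = \Vert f/E \Vert_{H^2}$, but this is immediate since the $\mathcal{H}(E)$-norm is computed as an $L^2(\R)$-norm and $f^\sharp(x) = \overline{f(x)}$ for real $x$, so $|f^\sharp(x)/E(x)| = |f(x)/E(x)|$ pointwise on $\R$; no detour through $\mathcal{K}(\Theta)$ is needed.
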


If $\mathcal{H}$ satisfies the above axioms, 
there exists $E \in \mathcal{HB}$ 
such that $\mathcal{H}=\mathcal{H}(E)$ and 
$\Vert f \Vert_{\mathcal{H}}=\Vert f \Vert_{\mathcal{H}(E)}$ for all $f \in \mathcal{H}$. 
The choice of $E$ is not unique. 
In fact, $\mathcal{H}(E)=\mathcal{H}(E_\theta)$ 
for any $\theta\in [0,\pi)$ if  $E_\theta(z):=e^{i\theta}E(z)$.  

\subsection{} It is known that $\mathcal{H}(E_1)=\mathcal{H}(E_2)$ if and only if 
\[
\begin{bmatrix} A_2(z) \\ B_2(z) \end{bmatrix}
= 
\begin{bmatrix} a & b \\ c & d \end{bmatrix}
\begin{bmatrix} A_1(z) \\ B_1(z) \end{bmatrix}\quad \text{for some} \quad  
\begin{bmatrix} a & b \\ c & d \end{bmatrix} \in SL_2(\R). 
\]
In fact, we see that this relation does not change the reproducing kernel
\[
K(z,w) 
= \frac{\overline{A_1(z)}B_1(w) - A_1(w)\overline{B_1(z)}}{\pi (w - \bar{z})}
= \frac{\overline{A_2(z)}B_2(w) - A_2(w)\overline{B_2(z)}}{\pi (w - \bar{z})}. 
\]
However, it does change $Q$ 
\[
Q_1(z) = i\, \frac{1-\Theta_1(z)}{1+\Theta_1(z)} = \frac{B_1(z)}{A_1(z)}, 
\quad 
Q_2(z) = i\, \frac{1-\Theta_2(z)}{1+\Theta_2(z)} = \frac{B_2(z)}{A_2(z)},
\]
and therefore it also changes $\Theta$. 
Moreover, $\mathcal{K}(\Theta_1)\not=\mathcal{K}(\Theta_2)$ 
even if  $\mathcal{K}(\Theta_1)\simeq \mathcal{K}(\Theta_2)$ as a Hilbert space.  

Here, we recall the Iwasawa decomposition 
\[
\begin{bmatrix} a & b \\ c & d \end{bmatrix}
= n(x)a(r)\kappa(\theta)
=
\begin{bmatrix} 1 & 0 \\ x & 1 \end{bmatrix} 
\begin{bmatrix} r & 0 \\ 0 & r^{-1} \end{bmatrix}
\begin{bmatrix}  \cos\theta & \sin\theta \\ -\sin\theta & \cos\theta \end{bmatrix}. 
\]
Then $Q$ is invariant under the transformations $n(x)$ and $a(r)$.  
If we set 
\[
\begin{bmatrix} A_\theta(z) \\ B_\theta(z) \end{bmatrix}
= 
\begin{bmatrix}  \cos\theta & \sin\theta \\ -\sin\theta & \cos\theta \end{bmatrix}
\begin{bmatrix} A(z) \\ B(z) \end{bmatrix},  
\]
we have 
\[
E_\theta := A_\theta -i B_\theta = e^{i\theta}(A-iB) = e^{i\theta} E. 
\]
Therefore, $Q_\theta$ and the associated measure $\mu_{\Theta_\theta}$ 
are parametrized by $\theta \in [0,\pi)$. 
As we will see in Section \ref{section_9_1}, 
these measures correspond to the self-adjoint extensions of 
the multiplication operator by the independent variable.

\subsection{} As found in Section \ref{section_7_2}, 
the polynomial $E_0$ in \eqref{eq_706} belongs to $\mathcal{HB}$. 
Therefore, $\mathcal{H}(E_0)$ is well defined and is three-dimensional, 
since the degree of $E_0$ is three. We have 
\[
A_0(z) = \frac{1}{2}(E_0(z)+E_0^\sharp(z)) = z(z^2-1), \quad 
B_0(z) = \frac{i}{2}(E_0(z)-E_0^\sharp(z)) = 1-2z^2. 
\]
The set of polynomials $p_0(z)=1$, $p_1(z)=z$, $p_2(z)=z^2$ 
forms a basis of $\mathcal{H}(E_0)$, but it is not orthogonal. 
Applying the determinant form of the Gram–Schmidt process, 
\[
q_k(z) 
= \frac{1}{\sqrt{D_{k-1}D_k}}
\begin{vmatrix}
\langle p_0, p_0 \rangle & \langle p_0, p_1 \rangle & \cdots & \langle p_0, p_{k-2} \rangle &  p_0 \\
\langle p_1, p_0 \rangle & \langle p_1, p_1 \rangle & \cdots & \langle p_1, p_{k-2} \rangle & p_1 \\
\vdots & \vdots & \ddots & \vdots & \vdots \\
\langle p_{k-1}, p_0 \rangle& \langle p_{k-1}, p_1\rangle & \cdots & \langle p_{k-1}, p_{k-2} \rangle & p_{k-1} 
\end{vmatrix}, 
\]
\[
D_k=
\begin{vmatrix}
\langle p_0, p_0 \rangle & \langle p_0, p_1 \rangle & \cdots & \langle p_0, p_{k-1} \rangle  \\
\langle p_1, p_0 \rangle & \langle p_1, p_1 \rangle & \cdots & \langle p_1, p_{k-1} \rangle  \\
\vdots & \vdots & \ddots & \vdots  \\
\langle p_{k-1}, p_0 \rangle& \langle p_{k-1}, p_1\rangle & \cdots & \langle p_{k-1}, p_{k-1} \rangle 
\end{vmatrix}
\]
with 
\[
\aligned 
\langle p_0,p_0 \rangle_{\mathcal{H}(E_0)} 
& 
= 2\pi, \\
\langle p_0,p_1 \rangle_{\mathcal{H}(E_0)} 
& = \langle p_1,p_0 \rangle_{\mathcal{H}(E_0)} 
= 0, \\
\langle p_0,p_2 \rangle_{\mathcal{H}(E_0)} 
& = \langle p_2,p_0 \rangle_{\mathcal{H}(E_0)} 
= \pi, \\
\langle p_1,p_1 \rangle_{\mathcal{H}(E_0)} 
& 
= \pi, \\
\langle p_1,p_2 \rangle_{\mathcal{H}(E_0)} 
& = \langle p_2,p_1 \rangle_{\mathcal{H}(E_0)} 
= 0, \\
\langle p_2,p_2 \rangle_{\mathcal{H}(E_0)} 
& 
= \pi,
\endaligned 
\]
we obtain the orthonormal basis
\[
q_0(z) = \frac{1}{\sqrt{2\pi}}, \quad 
q_1(z) = \frac{z}{\sqrt{\pi}}, \quad 
q_2(z) = \frac{2z^2-1}{\sqrt{2\pi}}. 
\]
Other natural bases will be considered in the next section.
\medskip

By \eqref{eq_701} and \eqref{eq_707}, we obtain 
\[
m_0 = 2\pi, \quad m_k = \frac{\pi}{2}(1+(-1)^k) \quad (k \geq 1), \quad H_2 = \pi^3. 
\]
Therefore, the reproducing kernel is given by 
\[
K(z,w) = -\frac{1}{\pi^3} \det
\begin{bmatrix}
0 & 1 & w & \ w^2 \\
1 & 2\pi & 0 &  \pi \\
\bar{z} & 0 & \pi &  0 \\
\bar{z}^2 & \pi & 0 &  \pi 
\end{bmatrix}
\]
by \eqref{eq_805}. 
One readily checks that this agrees with 
\[
\frac{\overline{A_0(z)}B_0(w) - A_0(w)\overline{B_0(z)}}{\pi (w - \bar{z})}
=
\frac{\overline{z(z^2-1)}(1-2w^2) - w(w^2-1)\overline{(1-2z^2)}}{\pi (w - \bar{z})}.
\]

%
%
\section{Multiplication by an independent variable in de Branges spaces}
%
%

To establish the isometric isomorphism 
$L^2(\tau_0)\simeq L^2(d\mu_{\Theta_0}) \simeq \mathcal{K}(\Theta_0)\simeq\mathcal{H}(E_0)$, 
we use an orthogonal system consisting of the eigenfunctions of 
some self-adjoint extension of the multiplication operator $\mathsf{M}$ on $\mathcal{H}(E_0)$. 

\subsection{} \label{section_9_1}

For a de Branges space $\mathcal{H}(E)$, 
we denote by ${\mathsf M}$ the multiplication operator by the independent variable 
\[
{\mathsf M}: F(z) \mapsto zF(z) 
\]
equipped with the domain
\[
{\mathfrak D}({\mathsf M}) := \{ F(z) \in {\mathcal H}(E) ~|~ z F(z) \in {\mathcal H}(E) \}.
\]

\begin{theorem} \label{thm_domain}
Let $E \in \mathcal{HB}$. 
For a real number $\theta \in [0,\pi)$, we define 
\begin{equation*}
S_\theta(z) 
= e^{i\theta}E(z)-e^{-i\theta}E^\sharp(z)
\end{equation*}
and let $A(z)$ and $B(z)$ be as in \eqref{eq_804}. 
\begin{enumerate}
\item The domain ${\mathfrak D}({\mathsf M})$ is not dense in ${\mathcal H}(E)$ 
if and only if 
\[
A(z)u+B(z)v \in {\mathcal H}(E)
\]
for some $(u,v)\in{\C}^2\setminus\{(0,0)\}$ with $|u-iv|=|u+iv|$, 
or equivalently, 
\[
S_\theta(z) \in {\mathcal H}(E) 
\]
for some $\theta \in [0,\pi)$ 
$($We have $A(z)u+B(z)v=\lambda S_\theta(z)$ for some $\lambda \in {\C} \setminus\{0\}$ and $\theta \in [0,\pi)$ 
if $|u-iv|=|u+iv|)$. 
\item The function $S_{\theta}(z)$ belongs to ${\mathcal H}(E)$ for at most one $\theta \in [0,\pi)$. 
\item Suppose that $A(z)u+B(z)v$ belongs to ${\mathcal H}(E)$ 
for some $(u,v)\in{\C}^2\setminus\{(0,0)\}$ with $|u-iv|=|u+iv|$. 
Then the orthogonal complement of ${\mathfrak D}({\mathsf M})$ 
is spanned by $A(z)u+B(z)v:$
\[
{\mathfrak D}({\mathsf M})^{\perp} = {\rm span}\{A(z)u+B(z)v\}={\rm span}\{S_\theta(z)\}. 
\]
Hence, if the domain ${\mathfrak D}({\mathsf M})$ is not dense in ${\mathcal H}(E)$, 
the closure of ${\mathfrak D}({\mathsf M})$ has codimension $1$ in ${\mathcal H}(E)$. 
\item If ${\mathfrak D}({\mathsf M})$ is not dense in ${\mathcal H}(E)$, 
${\mathfrak D}({\mathsf M})^{\perp}$ is a de Branges space of dimension one.
\end{enumerate}
\end{theorem}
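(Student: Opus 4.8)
The plan is to reduce all four parts to the single question of when $\mathcal{H}(E)$ contains one of the functions $S_\theta$, and then to locate $\mathfrak{D}(\mathsf{M})^\perp$ explicitly. Throughout I assume $\mathcal{H}(E)\neq\{0\}$ (else the statement is vacuous), and I use the isometry $\|f\|_{\mathcal{H}(E)}^2=\int_\R|f(x)|^2|E(x)|^{-2}\,dx$ together with the unitary identification $\mathcal{H}(E)\cong\mathcal{K}(\Theta)$, $f\leftrightarrow f/E$, $\Theta=E^\sharp/E$. First I would record the elementary identities $S_\theta=e^{i\theta}E-e^{-i\theta}E^\sharp=2i(\sin\theta\,A-\cos\theta\,B)$ and $S_\theta^\sharp=-S_\theta$, and the fact that for $u,v\in\C$ one has $|u-iv|=|u+iv|\iff\bar uv\in\R\iff(u,v)$ is a complex multiple of a real pair, which is itself a real multiple of $(\sin\theta,-\cos\theta)$ for a unique $\theta\in[0,\pi)$; this yields the equivalence and the parenthetical formula $Au+Bv=\lambda S_\theta$ asserted in (1). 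I would also note that a zero $w\in\C\setminus\R$ of $S_\theta$ would force $|E(w)|=|E^\sharp(w)|$ (using $E^\sharp(w)=\overline{E(\bar w)}$ on $\C_-$), contradicting $E\in\mathcal{HB}$; hence every $S_\theta$ has only real zeros, and $S_\theta\not\equiv0$ unless $\Theta$ is constant, i.e. unless $\mathcal{H}(E)=\{0\}$.

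Second I would determine $\mathfrak{D}(\mathsf{M})$ concretely. For $F\in\mathcal{H}(E)$ the functions $F/E$ and $F^\sharp/E$ lie in $H^2\subset N^+$ (the Smirnov class), and since also $z\in N^+$, both $z(F/E)$ and $z(F^\sharp/E)$ lie in $N^+$; an $N^+$-function with boundary values in $L^2(\R)$ lies in $H^2$, so $zF\in\mathcal{H}(E)\iff\int_\R x^2|F(x)|^2|E(x)|^{-2}\,dx<\infty$. Thus $\mathfrak{D}(\mathsf{M})=\{F\in\mathcal{H}(E):xF\in L^2(|E|^{-2}dx)\}$ and $\langle\mathsf{M}F,G\rangle_{\mathcal{H}(E)}=\int_\R xF\overline G\,|E|^{-2}dx$ for $F\in\mathfrak{D}(\mathsf{M})$, $G\in\mathcal{H}(E)$; on the model side, $\mathfrak{D}(\widetilde{\mathsf M})=\{h\in\mathcal{K}(\Theta):xh\in L^2(\R)\}$ for the corresponding operator $\widetilde{\mathsf M}$ on $\mathcal{K}(\Theta)$, and for such $h$ one automatically has $xh\in\mathcal{K}(\Theta)$ by the same Smirnov argument.

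Third comes the computation of $\mathfrak{D}(\mathsf{M})^\perp$, which yields (1)--(3). One inclusion is direct: if $S_\theta\in\mathcal{H}(E)$ then $S_\theta\perp\mathfrak{D}(\mathsf{M})$, because $S_\theta/E=e^{i\theta}-e^{-i\theta}\Theta$ and, for $h\in\mathfrak{D}(\widetilde{\mathsf M})$ with $h=\mathsf{F}\phi$, $\phi\in L^2(0,\infty)$, the condition $xh\in L^2(\R)$ forces $\phi'\in L^2$, hence $\phi(0)=0$, hence $\int_\R h=0$; applying this to $h_1:=\Theta\overline h$ (which again lies in $\mathfrak{D}(\widetilde{\mathsf M})$, as $h_1\in\mathcal{K}(\Theta)$ and $|xh_1|=|xh|$) gives $\int_\R\Theta\overline h=\overline{\int_\R h_1}=0$, so $\langle S_\theta/E,h\rangle_{L^2(\R)}=0$, i.e. $\langle S_\theta,F\rangle_{\mathcal{H}(E)}=0$ for $F\leftrightarrow h$. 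Part (2) follows since $S_{\theta_1},S_{\theta_2}\in\mathcal{H}(E)$ means $\Theta-e^{2i\theta_1},\Theta-e^{2i\theta_2}\in H^2(\C_+)$, so the constant $e^{2i\theta_1}-e^{2i\theta_2}\in H^2(\C_+)$ vanishes; hence $\theta_1=\theta_2$ (alternatively, two independent $S_{\theta_j}\in\mathcal{H}(E)$ would span $\C A+\C B\ni E$, impossible since $\|E\|_{\mathcal{H}(E)}=\|1\|_{L^2(\R)}=\infty$). For the harder direction I must show: if $\mathfrak{D}(\mathsf{M})$ is not dense and $0\neq g\in\mathcal{K}(\Theta)$ with $g\perp\mathfrak{D}(\widetilde{\mathsf M})$, then $g$ is a scalar multiple of $1-\bar c\,\Theta$, where $c:=\lim_{y\to+\infty}\Theta(iy)$ (so the limit exists, $|c|=1$, and $\Theta-c\in H^2(\C_+)$, i.e. $1-\bar c\Theta\in\mathcal{K}(\Theta)$). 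Granting this, with $e^{2i\theta_0}:=c$ the corresponding element of $\mathcal{H}(E)$ is $E(1-\bar c\Theta)=E-e^{-2i\theta_0}E^\sharp=e^{-i\theta_0}S_{\theta_0}\in\mathcal{H}(E)$, giving (1); combined with the first inclusion this gives $\mathfrak{D}(\mathsf{M})^\perp=\mathrm{span}\{S_{\theta_0}\}$, one-dimensional since $S_{\theta_0}\neq0$, which is (3) (and then $\overline{\mathfrak{D}(\mathsf{M})}$ has codimension $1$).

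Finally (4): when $\mathfrak{D}(\mathsf{M})$ is not dense, $V:=\mathfrak{D}(\mathsf{M})^\perp=\mathrm{span}\{S_{\theta_0}\}$ is one-dimensional, and I would check the de Branges axioms for $V$ with the inherited norm: (dB1) is automatic in finite dimension; (dB2) holds as $S_{\theta_0}^\sharp=-S_{\theta_0}\in V$ with the same norm; (dB3) is vacuous since every nonzero element of $V$ has no nonreal zeros, so its hypothesis is never satisfied for $w\in\C\setminus\R$. By the axiomatic characterization $V=\mathcal{H}(\widetilde E)$ with matching norm, a one-dimensional de Branges space. The main obstacle is the claim used in the third paragraph, namely that $\mathfrak{D}(\mathsf{M})$ has codimension at most one, the missing vector being the ``reproducing kernel of $\mathcal{H}(E)$ at infinity'', a nonzero multiple of $\lim_{\alpha\to\infty}K_\alpha/\|K_\alpha\|_{\mathcal{H}(E)}$, which is an associated function $S_{\theta_0}$ whenever it exists. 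I would establish it on the model side by an approximation argument --- cutting $h\in\mathcal{K}(\Theta)$ off by multiplying by $iy/(z+iy)$ and reprojecting onto $\mathcal{K}(\Theta)$, the reprojection lying in $\mathfrak{D}(\widetilde{\mathsf M})$ up to an error controlled by $\Theta(iy)$, so that the only obstruction to approximating $h$ is the persistence of the limiting constant $\lim_y\Theta(iy)$, which produces the at most one-dimensional defect --- or, equivalently, by using de Branges' difference-quotient identities to show that $zG$ is an associated function whenever $G\perp\mathfrak{D}(\mathsf{M})$ and matching coefficients against $A,B,E,E^\sharp$. Everything else is routine; this density/codimension step is where the technical care lies.
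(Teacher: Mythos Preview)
The paper does not prove this theorem; its entire proof reads ``See Theorem~I of \cite{dB61}, Theorem~29 of \cite{dB68}, and also Corollary~6.3 of \cite{KaWo99}.'' So your proposal already supplies far more than the paper. Your treatment of the elementary identities, of part~(2), of part~(4), and of the implication $S_\theta\in\mathcal{H}(E)\Rightarrow S_\theta\perp\mathfrak{D}(\mathsf{M})$ is correct; note in particular that $h,xh\in L^2$ forces $h\in L^1$ by Cauchy--Schwarz against $(1+x^2)^{-1/2}$, so splitting $\langle S_\theta/E,h\rangle$ into $e^{i\theta}\overline{\int h}-e^{-i\theta}\int\Theta\bar h$ is legitimate, and both integrals vanish by your Paley--Wiener/$H^1(\R)$ argument. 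Your Smirnov-class description of $\mathfrak{D}(\mathsf{M})$ is also correct.

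For the hard direction (that $\mathfrak{D}(\mathsf{M})^\perp$ is at most one-dimensional and spanned by some $S_{\theta_0}$) you sketch two approaches. The second --- showing that $zG$ is an associated function whenever $G\perp\mathfrak{D}(\mathsf{M})$, then matching against $A,B$ --- is exactly de~Branges' route in the cited Theorem~29 and can be completed. The first sketch, however, has a concrete flaw. A direct computation of the reprojection gives, for $h\in\mathcal{K}(\Theta)$ and $\ell:=\Theta\bar h\in\mathcal{K}(\Theta)$,
\[
P_{\mathcal{K}(\Theta)}\!\left(\frac{iy}{z+iy}\,h\right)=\frac{iy}{z+iy}\bigl(h-\overline{\ell(iy)}\,\Theta\bigr),
\]
so the error coefficient is $\overline{\ell(iy)}$ (an $H^2$-value, hence $o(1)$), not $\Theta(iy)$. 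More importantly, for each fixed $y$ with $\ell(iy)\neq 0$ one has
\[
\Bigl|\,x\cdot\tfrac{iy}{x+iy}\,\Theta(x)\Bigr|=\frac{y|x|}{\sqrt{x^2+y^2}}\longrightarrow y\quad(|x|\to\infty),
\]
so $x\cdot P_{\mathcal{K}(\Theta)}(\tfrac{iy}{z+iy}h)\notin L^2(\R)$ and the reprojected cutoff is \emph{not} in $\mathfrak{D}(\widetilde{\mathsf M})$. Hence this approximation does not exhibit density, and the heuristic ``the only obstruction is the persistence of $\lim_y\Theta(iy)$'' does not go through as written. Drop that line and carry out the difference-quotient/associated-function argument instead.
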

\begin{proof}
See Theorem I of ~\cite{dB61}, Theorem 29 of ~\cite{dB68}, 
and also Corollary 6.3 of \cite{KaWo99}. 
\end{proof}

\begin{theorem}
Let $E \in \mathcal{HB}$. 
\begin{enumerate}
\item The operator ${\mathsf M}$ is symmetric and closed, i.e., 
$\langle {\mathsf M} F,G \rangle = \langle F, {\mathsf M} G \rangle$ for every $F,G\in{\mathfrak D}({\mathsf M})$ and 
its graph is closed in ${\mathcal H}(E) \oplus {\mathcal H}(E)$. 
\item The operator $\mathsf M$ has deficiency indices $(1,1)$. 
\item The operator $\mathsf M$ is real with respect to the involution $\sharp$, 
i.e, satisfies ${\mathsf M}(F^\sharp)=({\mathsf M}F)^\sharp$ for $F \in {\mathfrak D}({\mathsf M})$. 
\end{enumerate}
\end{theorem}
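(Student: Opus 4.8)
I would dispatch the three claims in the order (1), (3), (2); the first two are essentially bookkeeping against the axioms (dB1)--(dB3), while (2) carries the real content. For the symmetry part of (1), the plan is to pass to the isometric $L^2$-model $f\mapsto f|_{\R}$ that identifies $\mathcal{H}(E)$ isometrically with a subspace of $L^2(\R,|E(x)|^{-2}\,dx)$: for $F,G\in\mathfrak{D}(\mathsf{M})$ all four of $F$, $\mathsf{M}F=zF$, $G$, $\mathsf{M}G=zG$ lie in $\mathcal{H}(E)$, hence in that $L^2$-space, so the two integrals representing $\langle\mathsf{M}F,G\rangle$ and $\langle F,\mathsf{M}G\rangle$ converge absolutely and coincide because the multiplier $x$ is real on $\R$. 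For closedness of the graph I would invoke (dB1): if $F_n\to F$ and $zF_n(z)\to H$ in $\mathcal{H}(E)$, then continuity of point evaluation at each $z\in\C\setminus\R$ forces $H(z)=zF(z)$ there, hence identically since both sides are entire; as $H\in\mathcal{H}(E)$ this puts $F$ in $\mathfrak{D}(\mathsf{M})$ with $\mathsf{M}F=H$. Claim (3) is then immediate: for $F\in\mathfrak{D}(\mathsf{M})$ we get $F^\sharp\in\mathcal{H}(E)$ from (dB2), and $(zF(z))^\sharp=z\,\overline{F(\bar z)}=zF^\sharp(z)$ lies in $\mathcal{H}(E)$ by (dB2) applied to $\mathsf{M}F$, whence $F^\sharp\in\mathfrak{D}(\mathsf{M})$ and $\mathsf{M}(F^\sharp)=zF^\sharp=(\mathsf{M}F)^\sharp$.

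The core of the proof is (2), and there the plan is to identify, for each $w\in\C\setminus\R$, the range of $\mathsf{M}-w$ with the hyperplane $\{f\in\mathcal{H}(E):f(w)=0\}$. The inclusion $\subseteq$ is clear from the shape $(z-w)F(z)$ of range elements. For $\supseteq$, given $f\in\mathcal{H}(E)$ with $f(w)=0$, I would set $F(z):=f(z)/(z-w)$, which is entire; axiom (dB3) applied to $f$ at the non-real point $w$ gives $\tfrac{z-\bar w}{z-w}f(z)\in\mathcal{H}(E)$, and since $\tfrac{z-\bar w}{z-w}=1+\tfrac{w-\bar w}{z-w}$ this exhibits $(w-\bar w)F$, hence $F$, as an element of $\mathcal{H}(E)$, while $zF(z)=f(z)+wF(z)\in\mathcal{H}(E)$ shows $F\in\mathfrak{D}(\mathsf{M})$, so $f=(\mathsf{M}-w)F$. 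With the identification in hand, $\mathrm{ran}(\mathsf{M}-w)=\ker(\mathrm{ev}_w)$, where $\mathrm{ev}_w\colon f\mapsto f(w)$ is continuous by (dB1) and nonzero because its Riesz representative, the reproducing kernel $K(w,\cdot)$, has squared norm $K(w,w)=\bigl(|E(w)|^2-|E^\sharp(w)|^2\bigr)\big/\bigl(2\pi i(\bar w-w)\bigr)$, which is strictly positive for every non-real $w$ by the defining inequality of $\mathcal{HB}$. Hence $\mathrm{ran}(\mathsf{M}-w)$ is closed of codimension one and $\mathcal{H}(E)\ominus\mathrm{ran}(\mathsf{M}-w)=\mathrm{span}\{K(w,\cdot)\}$ is one-dimensional for every $w\in\C\setminus\R$; specializing to $w=i$ and $w=-i$ gives deficiency indices $(1,1)$. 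Alternatively, once (3) is in hand, $\mathsf{M}$ commutes with the conjugation $f\mapsto f^\sharp$, so the two defect numbers automatically coincide and computing the defect at a single non-real point suffices.

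I expect the main obstacle to be the range identification in (2): one must be certain that dividing out the zero of $f$ at $w$ returns a function in the \emph{domain} of $\mathsf{M}$, not merely in $\mathcal{H}(E)$, and axiom (dB3) is precisely the ingredient securing this. The remaining inputs --- continuity of point evaluation from (dB1) and the strict Hermite--Biehler inequality forcing $K(w,w)\neq 0$ --- are exactly what pin the defect down to dimension one rather than zero or larger, while everything else reduces to routine manipulation in the $L^2$-model and with the involution $\sharp$; in particular, since the defect numbers are read off from orthocomplements of ranges, no appeal to density of $\mathfrak{D}(\mathsf{M})$ is needed (and indeed that domain need not be dense, as the previous theorem records).
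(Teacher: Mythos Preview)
Your argument is correct in all three parts. The paper itself does not prove this theorem; it simply cites Proposition~4.2 of Kaltenb\"ack--Woracek \cite{KaWo99}, so there is no ``paper's own proof'' to compare against beyond that reference. Your direct approach via the de~Branges axioms (dB1)--(dB3) is the standard one and is essentially what one finds in the cited literature: symmetry from the real-line $L^2$-model, closedness from continuity of point evaluation, reality from (dB2) and the algebraic identity $(zF)^\sharp=zF^\sharp$, and the deficiency computation from the identification $\mathrm{ran}(\mathsf{M}-w)=\ker(\mathrm{ev}_w)$ together with $K(w,w)>0$. Your closing remark that the defect indices are read off from orthocomplements of ranges, so that density of $\mathfrak{D}(\mathsf{M})$ is not needed, is exactly the right caveat here and is consistent with the framework of \cite{KaWo99}, where symmetric operators that may fail to be densely defined are handled via linear relations.
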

\begin{proof}
See Proposition 4.2 of \cite{KaWo99}. 
\end{proof}

\begin{theorem} \label{thm_self_adjoint}
Let $E \in \mathcal{HB}$. 
The set of all self-adjoint  extensions of ${\mathsf M}$ 
is parametrized by $\theta \in [0,\pi)$. 
If $S_\theta \not\in \mathcal{H}(E)$, 
the corresponding self-adjoint extension ${\mathsf M}_\theta$ 
is an operator described as follows: 
the domain of ${\mathsf M}_\theta$ is defined by
\begin{equation*}
{\mathfrak D}({\mathsf M}_\theta) 
= \left\{\left.
G(z) = \frac{S_\theta(w_0)F(z)-S_\theta(z)F(w_0)}{z-w_0} ~\right|~
F(z) \in {\mathcal H}(E)
\right\},
\end{equation*}
and the operation is defined by 
\begin{equation*}
{\mathsf M}_\theta G(z) 
= z \, G(z) + F(w_0)S_\theta(z),
\end{equation*}
where $w_0$ is a fixed complex number with $S_\theta(w_0)\not=0$. 
The domain ${\mathfrak D}({\mathsf M}_\theta)$ does not depend on the choice of  $w_0$. 

If $S_\theta \in \mathcal{H}(E)$, the corresponding self-adjoint extension $\mathsf{M}_\theta$ 
is given by the linear relation
\[
\mathsf{M}_\theta 
= \{ (G(z),zG(z)+wS_\theta(z))\,|\, G \in \mathfrak{D}(\mathsf{M}),~w \in \C\}. 
\]
In particular, $\mathfrak{D}(\mathsf{M}_\theta)=\mathfrak{D}(\mathsf{M})$. 
\end{theorem}
\begin{proof}
See Propositions 4.6 and 6.1 of \cite{KaWo99}. 
\end{proof}
\begin{remark}
Since $S_\theta \in \operatorname{Assoc}\mathcal{H}(E) = \mathcal{H}(E)+z\mathcal{H}(E)$, 
\[
\frac{S_\theta(w_0)F(z)-S_\theta(z)F(w_0)}{z-w_0} \in \mathcal{H}(E)
\] 
for arbitrary $w_0 \in \C$ and $F \in \mathcal{H}(E)$.  
\end{remark}

All zeros of $S_\theta(z)$ are real and they are simple (since $E(z)$ has no real zeros). 
The set of all eigenfunctions of $\mathsf{M}_\theta$ is  
\[
\frac{S_\theta(z)}{z-\gamma_\theta}, \quad S_\theta(\gamma_\theta)=0
\]
and it forms an orthogonal basis of $\mathcal{H}(E)$ 
if ${\mathfrak D}({\mathsf M}_\theta)$ is dense, that is, 
$S_\theta \not\in \mathcal{H}(E)$. 
The number of $\theta\in[0,\pi)$ for which
$S_\theta\in\mathcal H(E)$
is at most one.
If $S_\theta \in \mathcal{H}(E)$, the set 
\[
S_\theta(z), \quad \frac{S_\theta(z)}{z-\gamma_\theta}, \quad S_\theta(\gamma_\theta)=0
\]
forms an orthogonal basis of $\mathcal{H}(E)$. 
Proposition 2.1 in \cite{Re02} is often useful 
for determining whether $S_\theta \in \mathcal{H}(E)$. 
Since 
\[
S_\theta(z) = -2i A_{\theta+\pi/2}(z) = -2i\frac{E_{\theta+\pi/2}(z)+E_{\theta+\pi/2}^\sharp(z)}{2}
\]
for $E_\theta(z) =  e^{i\theta} E(z)$, 
there is no loss of generality in considering only the orthonormalization of  $A(z)/(z-\gamma)$. 
As a result, the set 
\[
\sqrt{\frac{2}{\pi|\Theta'(\gamma)|}} \frac{A(z)}{z-\gamma}, \quad A(\gamma)=0
\]
forms an orthonormal system, where $\Theta(z)=E^\sharp(z)/E(z)$ 
(\cite[Theorem 22]{dB68} and its proof. See also \cite[Proof of Proposition 3.2]{Su23b}). 
Since $A(\gamma)=0$ and
$E(\gamma)= -iB(\gamma)\neq 0$,
we have $B(\gamma)\neq0$.
Differentiating
$\Theta=(A-iB)/(A+iB)$ and evaluating at $z=\gamma$ yield
\[
\Theta'(z) = \frac{2iA'(\gamma)}{B(\gamma)}.
\] 

\subsection{} 

For $E_0(z)=z^3+2iz^2-z-i$, 
\[
S_\theta(z) 
= e^{i\theta}E_0(z)-e^{-i\theta}E_0^\sharp(z)
= 2i(z^3-z)\sin\theta -2i(1-2z^2)\cos\theta
\]
Since $E_0$ is a polynomial of degree three, 
\[
\mathcal{H}(E_0)=\C 1+\C z+\C z^2 
\]
as a set, and thus
\[
{\mathfrak D}({\mathsf M})=\C 1+\C z  \subsetneq \mathcal{H}(E_0). 
\]
Therefore, there exists only one $\theta \in [0,\pi)$ 
for which $S_\theta \in \mathcal{H}(E_0)$, 
and $S_\theta \not\in \mathcal{H}(E_0)$ for other $\theta$'s. 
In fact, 
\[
S_0(z) = -2i(1-2z^2) \in \mathcal{H}(E_0)
\]
and $S_\theta(z) \not\in \mathcal{H}(E_0)$ for $\theta \in (0,\pi)$. 

For $\theta \in (0,\pi)$,
elements of ${\mathfrak D}({\mathsf M}_\theta)$
obtained from $F(z)=1,\,z,\,z^2$ are
\[
\aligned 
\frac{S_\theta(w_0)-S_\theta(z)}{(-2i)(z-w_0)}
&= 2w_0\cos\theta-(1-w_0^2)\sin\theta+(2\cos\theta+w_0\sin\theta)z+z^2\sin\theta, \\
\frac{S_\theta(w_0)z-S_\theta(z)w_0}{(-2i)(z-w_0)}
&= \cos\theta + w_0(2\cos\theta+w_0\sin\theta)z+w_0z^2\sin\theta, \\
\frac{S_\theta(w_0)z^2-S_\theta(z)w_0^2}{(-2i)(z-w_0)}
&= w_0\cos\theta + (\cos\theta+w_0\sin\theta)z+w_0^2z^2\sin\theta. 
\endaligned 
\]
These three polynomials are linearly independent
for $\theta\in(0,\pi)$.
Since $\dim \mathcal H(E_0)=3$,
it follows that
${\mathfrak D}(\mathsf M_\theta)=\mathcal H(E_0)$. 
The function $S_\theta(z)$ is a polynomial of degree three 
with three distinct real roots $\gamma_{i,\theta}$ ($i=1,2,3$). 
(We may take $\gamma_{i,\theta}$ ($i=1,2,3$) to be continuous functions of $\theta$.) 
Therefore, the set of polynomials 
\[
\frac{S_\theta(z)}{z-\gamma_{1,\theta}}, \quad 
\frac{S_\theta(z)}{z-\gamma_{2,\theta}}, \quad 
\frac{S_\theta(z)}{z-\gamma_{3,\theta}} 
\]
forms an orthogonal basis of $\mathcal{H}(E_0)$ consisting of eigenfunctions of ${\mathsf M}_\theta$. 
These eigenfunctions are quadratic, so they do not belong to ${\mathfrak D}({\mathsf M})$. 
Since 
\[
S_{\pi/2}(z) = 2iz(z^2-1)
\]
when $\theta=\pi/2$, 
the set of functions 
\begin{equation} \label{eq_901}
F_0(z) = \frac{z^2-1}{\sqrt{\pi}}, \quad 
F_{1}(z) = \frac{z(z+1)}{\sqrt{2\pi}}, \quad 
F_{-1}(z) = \frac{z(z-1)}{\sqrt{2\pi}} 
\end{equation} 
forms 
an orthonormal basis of $\mathcal{H}(E_0)$ consisting of eigenfunctions of $M_{\pi/2}$. 

\subsection{} \label{section_9_3} 

Using the basis \eqref{eq_901}, 
we confirm that the restriction to the level set $\sigma(\Theta_0)$ 
defines an isometric isomorphism  $\mathcal{K}(\Theta_0) \simeq L^2(d\mu_{\Theta_0})$. 
First, 
\[
\aligned 
\Vert F_k \Vert_{\mathcal{H}(E_0)}^2 & 
= \Vert F_k/E_0 \Vert_{L^2(\R)}^2
= \int |F_k(z)|^2 \frac{dz}{|E_0(z)|^2} = 1 \quad (k=0,1,-1)
\endaligned
\]
by definition. On the other hand, 
\[
\aligned 
\lim_{z\to 0}\frac{F_0(z)}{E_0(z)}  = -\frac{i}{\sqrt{\pi}}, \quad 
\lim_{z\to 1}\frac{F_1(z)}{E_0(z)}  = -i\sqrt{\frac{2}{\pi}}, \quad
\lim_{z\to -1}\frac{F_{-1}(z)}{E_0(z)}  = -i\sqrt{\frac{2}{\pi}}
\endaligned 
\]
by direct calculation. Therefore, 
\[
\aligned 
\Vert F_k/E_0 \Vert_{L^2(\mu_{\Theta_0})}^2 & = 
\int \left| \frac{F_k(z)}{E_0(z)} \right|^2 d\mu_{\Theta_0}(z) = 1 \quad (k=0,1,-1)
\endaligned
\]
by \eqref{eq_707}. 

%
%
\section{One special screw line in the model space $\mathcal{K}(\Theta_0)$} 
%
%

\subsection{} We define 
\[
\widehat{\psi_0}(z) := \frac{F_0(z)}{E_0(z)} = \frac{1}{\sqrt{\pi}}\frac{1+\Theta_0(z)}{z}, 
\]
\[
\widehat{\psi_1}(z) := \frac{F_1(z)}{E_0(z)} = \frac{1}{\sqrt{2\pi}}\frac{1+\Theta_0(z)}{z-1}, 
\]
\[
\widehat{\psi_{-1}}(z) := \frac{F_{-1}(z)}{E_0(z)} = \frac{1}{\sqrt{2\pi}}\frac{1+\Theta_0(z)}{z+1},  
\]
using the basis \eqref{eq_901} of $\mathcal{H}(E_0)$. 
Then, 
$\widehat{\psi_0}(z)$, $\widehat{\psi_1}(z)$, $\widehat{\psi_{-1}}(z)$ 
all belong to $\mathcal{K}(\Theta_0) \subset L^2(\R)$ 
by $\mathcal{K}(\Theta_0)=\mathcal{H}(E_0)/E_0$ 
and are the Fourier transforms of functions of $L^2(\R)$. 
Moreover, we define
\[
\aligned 
\mathfrak{S}_t(z)
& := \sqrt{\pi}
\left(\lim_{z\to0} \frac{e^{izt}-1}{z} \right) \widehat{\psi_0}(z) 
+ \sqrt{\frac{\pi}{2}}\cdot\frac{e^{it}-1}{1} \, \widehat{\psi_1}(z)
+ \sqrt{\frac{\pi}{2}}\cdot\frac{e^{-it}-1}{(-1)} \, \widehat{\psi_{-1}}(z) \\
& = \sqrt{\pi}  it \, \widehat{\psi_0}(z) 
+ \sqrt{\frac{\pi}{2}}(e^{it}-1) \, \widehat{\psi_1}(z)
- \sqrt{\frac{\pi}{2}}(e^{-it}-1) \, \widehat{\psi_{-1}}(z).
\endaligned 
\]
Then, $\mathfrak{S}_t(z)$ belongs to $\mathcal{K}(\Theta_0)$ for every $t \in \R$. 
Furthermore, the relations
\[
\frac{1}{\pi}\langle \mathfrak{S}_t, \mathfrak{S}_s \rangle = G_{g_0}(t,s) 
\]
and 
\[
\frac{1}{2\pi}\Vert \mathfrak{S}_t \Vert_{\mathcal{K}(\Theta_0)}^2
= \frac{1}{2\pi}\Vert \mathfrak{S}_t \Vert_{L^2(\R)}^2 = - g_0(t)
\]
hold by \eqref{eq_403} and \eqref{eq_502}. 
In other words, the mapping $\mathfrak{S}:\R \to \mathcal{K}(\Theta_0) \subset L^2(\R)$ 
is a screw line associated with $g_0(t)$ up to the multiple of $\sqrt{\pi}$. 

\subsection{} Using the screw line $\mathfrak{S}$, we define 
\[
\aligned 
\widehat{\mathcal{P}_\phi}(z) 
& = \int_{-\infty}^{\infty} \phi(t)\mathfrak{S}_t(z) \, dt \\
& =  \sqrt{\pi} \widehat{\phi}'(0)\, \widehat{\psi_0}(z) 
+ \sqrt{\frac{\pi}{2}}(\widehat{\phi}(1)-\widehat{\phi}(0)) \widehat{\psi_1}(z)
- \sqrt{\frac{\pi}{2}}(\widehat{\phi}(-1)-\widehat{\phi}(0)) \widehat{\psi_{-1}}(z).
\endaligned 
\]
Then, the mapping $\phi \mapsto \widehat{\mathcal{P}_\phi}$ 
defines an isometric isomorphism from $\mathcal{H}(G_{g_0})$ to $\mathcal{K}(\Theta_0)$, 
because 
\begin{equation} \label{eq_1001}
\aligned 
\Vert \widehat{\mathcal{P}_\phi} \Vert_{\mathcal{K}(\Theta_0)}^2
& =
\Vert \widehat{\mathcal{P}_\phi} \Vert_{L^2(\R)}^2 \\
& = \pi \left|\widehat{\phi}'(0)\right|^2 
+ \frac{\pi}{2}\left|\widehat{\phi}(1)\right|^2 
+ \frac{\pi}{2}\left|\widehat{\phi}(-1)\right|^2 \\
& = \pi \langle \phi, \phi \rangle_{g_0,\infty}
\endaligned 
\end{equation} 
by \eqref{eq_504} and $\widehat{\phi}(0)=0$ for $\phi \in \mathcal{H}(G_{g_0})$. 

%
%
\section{Summary, I.} 
%
%

From the results obtained in the above sections, 
we obtain the commutative diagram
\[
\xymatrix{
\mathcal{H}(G_{g_0})  \ar[d]_-{\Phi_1} 
\ar[r]^-{\frac{1}{\sqrt{\pi}}E_0\widehat{\mathcal{P}}}\ar@{}[rd]|-{\circlearrowright}
 &
\mathcal{H}(E_0)\ar[d]^-{r_{\Theta_0}} 
\\
L^2(\tau_0) \ar[r]_-{\times \frac{1}{\sqrt{\pi}}}
 & L^2(d\mu_{\Theta_0})
}
\]
with 
\[
\Vert \phi \Vert_{g_0,\infty}^2 
= \Vert \Phi_1(\phi) \Vert_{L^2(\tau_0)}^2 
= \frac{1}{\pi} \Vert \widehat{\mathcal{P}}_\phi \Vert_{L^2(\R)}^2
= \frac{1}{\pi} \Vert \widehat{\mathcal{P}}_\phi|_{\sigma(\Theta_0)} \Vert_{L^2(d\mu_{\Theta_0})}^2,
\]
where $r_{\Theta_0}$ is the composition of 
$\mathcal{H}(E_0) \to \mathcal{K}(\Theta_0): f \mapsto f/E_0$ 
and the restriction $\mathcal{K}(\Theta_0) \to L^2(d\mu_{\Theta_0}):  
F \mapsto F|_{\sigma(\Theta_0)}$. 
All maps in this diagram are isometric isomorphisms: 
\begin{itemize}
\item $\Phi_1:\mathcal{H}(G_{g_0}) \to L^2(\tau_0)$ is in Section \ref{section_5_3}. 
\item $L^2(\tau_0) \to L^2(d\mu_{\Theta_0})$ is isometric by \eqref{eq_403} and \eqref{eq_707}. 
\item $r_\Theta:\mathcal{H}(E_0) \to \mathcal{K}(\Theta_0) 
\to L^2(d\mu_{\Theta_0})$ is in  Section \ref{section_9_3}.
\item $\tfrac{1}{\sqrt{\pi}}E_0\widehat{\mathcal{P}}:
\mathcal{H}(G_{g_0}) \to \mathcal{K}(\Theta_0) \to \mathcal{H}(E_0)$ 
is isometric by \eqref{eq_1001}.
\end{itemize}
Note that the isomorphism $r_{\Theta_0}:\mathcal{H}(E_0) \to L^2(d\mu_{\Theta_0})$ 
depends on the choice of the basis or generator of $\mathcal{H}(E_0)$. 
This choice of basis is essential for the definition of 
$\widehat{\mathcal{P}}:\mathcal{H}(G_{g_0}) \to \mathcal{K}(\Theta_0)$. 
Recall that the choice of this basis comes from the choice of self-adjoint extensions 
of the multiplication operator $\mathsf{M}$. 

The structure of a de Branges space is controlled by a Hamiltonian $H$ 
as described in the next section.  
The isometric isomorphism from $\mathcal{H}(G_{g_0})$ to $\mathcal{H}(E_0)$ 
factors through the space $\widehat{L}^2(H_0)$ 
generated by a Hamiltonian $H_0$ of $\mathcal{H}(E_0)$: 
\begin{equation} \label{eq_1101}
\xymatrix{
\mathcal{H}(G_{g_0}) 
\ar[r]^-{\mathsf{L}_0} \ar[rd]_-{\frac{1}{\sqrt{\pi}}E_0\widehat{\mathcal{P}}} 
& \widehat{L}^2(H_0) \ar[d]^-{\mathsf{W}} \ar@{}@<-2.5ex>[d]|{\circlearrowright}  \\
 & \mathcal{H}(E_0).
}
\end{equation}
Our next task is to describe this diagram in detail. 
To this end, we need the basis chosen in the definition of $\widehat{\mathcal{P}}$, 
which consists of eigenfunctions of the self-adjoint extension $\mathsf{M}_{\pi/2}$.
 
%
%
\section{Structure Hamiltonians of de Branges spaces} 
%
%

\subsection{} 
Here we refer to \cite[Section 2]{Win14}, \cite{Wo17}, and references therein 
for theoretical and historical details on Hamiltonians. 

\begin{definition}
A ${\rm Sym}_2(\R)$-valued function $H$ defined on     
$I=(t_0,t_1)$, $[t_0,t_1)$, $(t_0,t_1]$, $[t_0,t_1]$  
 $(-\infty<t_0<t_1 \leq \infty)$ is called a \textit{\textbf{Hamiltonian}} 
if 
\begin{itemize}
\item[(H1)] $H(t)$ is nonnegative definite for almost every $t \in I$, 
\item[(H2)] $H\not\equiv 0$ on any subset of $I$ with positive Lebesgue measure, 
\item[(H3)] $H$ is locally integrable on $I$ with respect to the Lebesgue measure.
\end{itemize} 
For the Hamiltonian $H$, 
the first-order system 
\begin{equation} \label{eq_1201} 
-\frac{d}{dt}
\begin{bmatrix}
A(t,z) \\ B(t,z)
\end{bmatrix}
= z 
\begin{bmatrix}
0 & -1 \\ 1 & 0
\end{bmatrix}
H(t)
\begin{bmatrix}
A(t,z) \\ B(t,z)
\end{bmatrix}, \quad 
z \in \C
\end{equation}
is called a \textit{\textbf{canonical system}} on $I$. 
\end{definition}

The Hamiltonian $H$ is said to be in the {\it limit circle case} at the left endpoint $t_0$ 
if 
\[
\int_{t_0}^t {\rm tr}\,H(u) \, du < \infty
\]
for some (and hence for all) $t \in I$ 
and in the {\it limit point case} otherwise. 
A similar notion is defined for the right endpoint $t_1$. 
For the Hamiltonian $H$ on $I$, 
an open interval $J \subset I$ is called an {\it indivisible interval} of type $\theta$ 
for $H$ if 
\[
\begin{bmatrix}
\cos\theta & \sin\theta
\end{bmatrix}
\begin{bmatrix}
0 & -1 \\ 1 & 0
\end{bmatrix}
H(t) =0
\]
holds for almost every $t \in J$, that is, 
\[
H(t) = h(t)
\begin{bmatrix} 
\cos^2\theta & \cos\theta\sin\theta \\
\cos\theta\sin\theta & \sin^2\theta
\end{bmatrix}
= h(t)
\begin{bmatrix}  \cos\theta \\ \sin\theta \end{bmatrix} 
[\,\cos\theta~~\sin\theta\,]
\]
for almost every $t \in J$ for some scalar function $h(t)$. 
A point $t \in I$ is called {\it regular} for $H$ 
if it is not inner point of an indivisible interval. 
The set of all regular points for $H$ is denoted by $I_{\rm reg}$. 
\bigskip

\begin{theorem} \label{thm_12_1}
Let $E \in \mathcal{HB}$.  
Then, there exists a Hamiltonian $H$ 
defined on some interval $I=(t_0,t_1]$ 
such that the following statements hold.

\begin{enumerate}
\item The Hamiltonian $H$ is in the limit circle case at the right endpoint $t_1$. 
\item Let ${}^t[A(t,z)~B(t,z)]$ ($t \in I$, $z\in \C$) 
be the unique solution of the initial value problem consisting of \eqref{eq_1201} and 
\[
\begin{bmatrix} A(t_1,z) \\ B(t_1,z) \end{bmatrix}
=
\begin{bmatrix} A(z) \\ B(z) \end{bmatrix},
\]
where $A$ and $B$ are functions in \eqref{eq_804}. 
Then, $E(t,z):=A(t,z)-iB(t,z)$ belongs to $\mathcal{HB}$ for all $t \in I$. 
\item The set of all de Branges subspaces coincides with $\{ \mathcal{H}(E_t)\,|\, t \in I_{\rm reg} \}$. 
\item We have 
\[
K(t; z, z) 
= \frac{\overline{A(t,z)}B(t,z) - A(t,z)\overline{B(t,z)}}{\pi (z - \bar{z})} ~\to 0 \quad 
\text{as} \quad t \to t_0
\]
pointwise for all $z \in \C$. 
\end{enumerate}
\end{theorem}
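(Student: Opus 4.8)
The plan is to realize $\mathcal{H}(E)$ as the top of a continuously parametrized chain of de Branges subspaces and to read off the Hamiltonian from the way the generating pairs vary along that chain; this is de Branges' structure theory, so I will indicate the main steps rather than reproduce the argument, citing \cite{dB68}, \cite{Win14}, \cite{Wo17} for the details.

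First I would invoke de Branges' ordering theorem for de Branges subspaces: the family of de Branges subspaces of $\mathcal{H}(E)$ (subspaces that are themselves de Branges spaces under the inherited norm) is totally ordered by inclusion, with $\mathcal{H}(E)$ as its largest element. Since, as recalled just before Theorem \ref{thm_domain}, a de Branges space determines its generator only up to the action of $SL_2(\R)$ on the pair $(A,B)$, each member of the chain can be represented by a column $\binom{A(s,z)}{B(s,z)}$, equivalently by $E_s := A(s,\cdot)-iB(s,\cdot)\in\mathcal{HB}$; I would fix the representative at the top so that $A(t_1,z)=A(z)$ and $B(t_1,z)=B(z)$, i.e.\ $E_{t_1}=E$, which supplies the terminal data required in (2). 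The degenerate bottom of the chain is \emph{not} a de Branges space in the sense of this note, which is why the interval $I$ comes out left-open at $t_0$.

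The technical heart is to upgrade the chain to a canonical system. For each $s$ in the chain, de Branges' theory attaches an entire $2\times 2$ transfer matrix $W_s(z)$ with real entries, $\det W_s\equiv 1$, $W_s(0)=I$, a multiplicative (cocycle) dependence on the endpoints, a monotonicity property that encodes both the inclusion of the Hilbert spaces and the matching of their norms, and $\binom{A(s,z)}{B(s,z)}=W_s(z)\binom{A(z)}{B(z)}$. Each $W_s$ maps the Hermite--Biehler class into itself, so $E_s=A(s,\cdot)-iB(s,\cdot)\in\mathcal{HB}$ for \emph{every} $s$ along the chain, which is the remaining part of (2). Choosing a convenient parametrization of the chain by a real variable $t$ (a ``change of clock'') and using the monotonicity of the transfer matrices, one shows that $t\mapsto W_t(z)$ is locally absolutely continuous and satisfies
\[
-\frac{d}{dt}W_t(z) = z\begin{bmatrix} 0 & -1 \\ 1 & 0 \end{bmatrix} H(t)\, W_t(z)
\]
for a measurable ${\rm Sym}_2(\R)$-valued $H$ which is nonnegative definite almost everywhere and locally integrable; putting $\binom{A(t,z)}{B(t,z)}:=W_t(z)\binom{A(z)}{B(z)}$ then produces exactly \eqref{eq_1201} with the prescribed terminal value and an $H$ satisfying (H1)--(H3). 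The main obstacle lies precisely here --- establishing the regularity of $t\mapsto W_t$ and the nonnegativity and local integrability of $H$ --- and it rests on the ordering theorem and on de Branges' integral representation of monotone chains of transfer matrices.

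Granting this, the remaining assertions are short. For (1): the construction places $t_1$ finite and includes it as a regular point (the chain stops there), so $\int_t^{t_1}{\rm tr}\,H(u)\,du<\infty$ by local integrability, i.e.\ $H$ is in limit circle case at $t_1$. For (3): by construction $t\mapsto\mathcal{H}(E_t)$ traverses the chain, and a maximal open parameter interval on which the subspace does not grow is exactly an indivisible interval, its type $\theta$ being read off from the constant direction along which $[\cos\theta~~\sin\theta]\binom{A(t,\cdot)}{B(t,\cdot)}$ is independent of $t$ there; hence distinct regular $t$ give distinct de Branges subspaces, while conversely every de Branges subspace equals $\mathcal{H}(E_t)$ for some $t\in I_{\rm reg}$. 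For (4): the map $t\mapsto K(t,z,z)=\Vert K(t,z,\cdot)\Vert^2_{\mathcal{H}(E_t)}$ is nondecreasing, because nested de Branges spaces with matching norms have increasing diagonal reproducing kernels, so the limit as $t\to t_0^{+}$ exists; and since, by the choice of $t_0$, the bottom of the chain is the trivial space (the normalized solution $\binom{A(t,\cdot)}{B(t,\cdot)}$ tending to a constant real direction), this limit equals $0$ for every $z\in\C$. See \cite{dB68}, as well as \cite{Win14} and \cite{Wo17}, for the full proofs.
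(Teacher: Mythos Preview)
Your proposal is correct and follows essentially the same approach as the paper, which simply refers the reader to Theorems 35 and 40 of \cite{dB68} (together with a clarifying remark that (3) does not exclude $\mathcal{H}(E_t)$ for non-regular $t$ from being a de Branges subspace); you have merely filled in a sketch of the de Branges ordering and transfer-matrix machinery behind those theorems, citing the same sources. Since the paper is explicitly expository and omits proofs of known results, your more detailed outline is consistent with, and an elaboration of, the paper's treatment.
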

\begin{proof}
See Theorems 35 and 40 in \cite{dB68}. 
Note that (3) does not mean that $\mathcal{H}(E_t)$ 
for $t \in I \setminus I_{\rm reg}$ is not a de Branges subspace of $\mathcal{H}(E)$ 
(cf.  Section \ref{section_12_3}). 
\end{proof}

\begin{definition} 
Let $E \in \mathcal{HB}$. 
We refer to the Hamiltonian $H$ satisfying (1)--(4) in Theorem \ref{thm_12_1}  
as the \textit{\textbf{structure Hamiltonian}} of $E$. 
\end{definition}

A Hamiltonian $H_2$ on $I_2$ 
is called a reparameterization of a Hamiltonian $H_1$ on $I_1$ 
if there exists an increasing bijection $\tau$ 
between $I_2$ and $I_1$ such that $\tau$ and $\tau^{-1}$ are both absolutely continuous
and
\[
H_2(x) = H_1(\tau(x))\tau'(x). 
\]
Precisely, the structure Hamiltonian of $E \in \mathcal{HB}$ 
is the equivalence class for the reparameterization of $H$ satisfying (1)--(4). 
The Hamiltonian $H$ is uniquely determined up to reparameterization 
under the normalization $E(0)=1$ for $E$. 

\subsection{} \label{section_12_2} 

To find the structure Hamiltonian of a polynomial $E \in \mathcal{HB}$, 
we may use \cite[p. 137, Proof of Theorem VII]{dB61}. 
Let 
\[
W(z) = \begin{bmatrix} A(z) & B(z) \\ C(z) & D(z) \end{bmatrix}
\]
be a matrix consisting of polynomials of $z$ satisfying $W(0)=I$, 
\begin{equation} \label{eq_1202}
A(z)D(z)-B(z)C(z)=1, 
\end{equation}
\begin{equation} \label{eq_1203}
\Re \Bigl[ A(z)\overline{D(z)}-B(z)\overline{C(z)} \,\Bigr] \geq 1, 
\end{equation}
and
\begin{equation} \label{eq_1204}
\frac{B(z)\overline{A(z)}-A(z)\overline{B(z)}}{z-\bar{z}} \geq 0, 
\quad  \frac{D(z)\overline{C(z)}-C(z)\overline{D(z)}}{z-\bar{z}} \geq 0.
\end{equation}
If $W(z)$ has degree $r:=\max\{{\rm deg}\,A,\,{\rm deg}\,B,\,{\rm deg}\,C,\,{\rm deg}\,D\}$, 
we find that 
\begin{equation*}
W(z)=\prod_{k=1}^{r} \left(I-z\begin{bmatrix}
\alpha_k & \beta_k \\ \beta_k & \gamma_k 
\end{bmatrix} J\right)W(0), \quad 
J=\begin{bmatrix} 0 & -1 \\ 1 & 0 \end{bmatrix}
\end{equation*}
with the factors taken from left to right in numerical order, 
where 
\[
\alpha_k \geq 0, \quad \gamma_k \geq 0, \quad \det \begin{bmatrix}
\alpha_k & \beta_k \\ \beta_k & \gamma_k 
\end{bmatrix} =0,
\]
and 
\[
\alpha_k \gamma_{k-1}+\gamma_k \alpha_{k-1}-2\beta_k \beta_{k-1} >0, 
\]
for $k=2,3,\dots,r$. Recall $W(0)=I$ by hypothesis. 
Define
\begin{equation*}
t_k:=\sum_{j=1}^k (\alpha_j+\gamma_j) \quad (k=1,\dots,r), \quad t_0=0  
\end{equation*}
and the locally constant ${\rm Sym}_2(\R)$-valued function 
\[
H(t):=\frac{1}{\alpha_k+\gamma_k}
\begin{bmatrix}
\alpha_k & \beta_k \\ 
\beta_k & \gamma_k
\end{bmatrix} 
\]
for $t \in [t_{k-1}, t_k)$ ($k=1,\dots,r$). 
Then, the solution $W(t,z)$ of the initial value problem
\begin{equation} \label{eq_1205}
\frac{d}{dt} W(t,z) J
= z W(t,z) H(t) , \quad W(0,z)=I
\end{equation}
recovers the original $W(z)$ as $W(t_r,z)=W(z)$. 
The solution $W(t,z)$ is often called 
the \textbf{fundamental solution} or \textbf{fundamental matrix} for $H$. 
Since the Hamiltonian $H$ in \eqref{eq_1205} is locally constant, 
it is not difficult to solve. 
In fact, we find that the solution is determined inductively by 
\begin{equation} \label{eq_1206}
W(t,z) = W(t_{k-1},z)
\left(
I-
\frac{z(t-t_{k-1})}{\alpha_{k}+\gamma_{k}}
\begin{bmatrix} 
\alpha_k & \beta_k \\ \beta_k & \gamma_k
\end{bmatrix}
J
\right), \quad 
J=\begin{bmatrix} 0 & -1 \\ 1 & 0 \end{bmatrix}
\end{equation}
for $t_{k-1} \leq t \leq t_k$, $1 \leq k \leq r$.

\subsection{} \label{section_12_3}
Apply the way of Section \ref{section_12_2} to $E_0(z)=z^3+2iz^2-z-i$. 
We define 
$C(z):=\frac{1}{2}(E_0(z)+E_0^\sharp(z))$ 
and 
$D(z):=\frac{i}{2}(E_0(z)-E_0^\sharp(z))$. 
By $E_0^\sharp(z)=-E_0(-z)$, 
$C(z)$ is odd 
and
$D(z)$ is even.  In fact, 
\[
C(z) = z^3 -z, \qquad D(z) = 1-2z^2. 
\]
Moreover, 
\[
\frac{D(z)}{C(z)} = Q_0(z),
\]
where $Q_0(z)$ is given by \eqref{eq_604}. 
We choose 
\[
A(z) = 1-2z^2, \quad B(z) = 4z. 
\]
Then $A(z)-iB(z)$ belongs to $\mathcal{HB}$ 
as well as $E_0(z)=C(z)-iD(z)$ and 
\eqref{eq_1202}, \eqref{eq_1203}, \eqref{eq_1204} 
are satisfied. According to the decomposition,
\[
\aligned 
W_0(z)
&=\begin{bmatrix} A(z) & B(z) \\ C(z) & D(z) \end{bmatrix} 
= \begin{bmatrix} 1-2z^2 & 4z \\ z^3-z & 1-2z^2 \end{bmatrix} \\
&=
\left(I-z\begin{bmatrix} 0 & 0 \\ 0 &1/2 \end{bmatrix}J\right)
\left(I-z\begin{bmatrix} 4 & 0 \\ 0 &0  \end{bmatrix}J\right)
\left(I-z\begin{bmatrix} 0 & 0 \\ 0 &1/2 \end{bmatrix}J\right),
\endaligned 
\]
we define
\[
t_0:=0, \quad t_1 := \frac{1}{2}, \quad t_2 := \frac{1}{2} + 4, \quad t_3 := \frac{1}{2} + 4 + \frac{1}{2}
\]
and 
\begin{equation} \label{eq_1207}
H_0(t) = 
\begin{cases} 
\displaystyle{\begin{bmatrix} 0 & 0 \\ 0 & 1 \end{bmatrix}}, & t_0 \leq t < t_1, 
\quad t_2 \leq t \leq t_3 \\[16pt] 
\displaystyle{\begin{bmatrix} 1 & 0 \\ 0 & 0 \end{bmatrix}}, & t_1 \leq t < t_2. 
\end{cases}
\end{equation}
Then, clearly $H_0$ is a Hamiltonian on $I_0=[t_0,t_3]=[0,5]$. 
Intervals $(t_0,t_1)=(0,1/2)$ and $(t_2,t_3)=(9/2,5)$ are indivisible intervals of type $\pi/2$ 
and the interval $(t_1,t_2)=(1/2,9/2)$ is the indivisible interval of type $0$ for $H_0$. 
The set of all regular points $I_{0,{\rm reg}}$ is $\{t_0,t_1,t_2,t_3\}$ 
and both endpoints $t_0$ and $t_3$ are limit circle case.  
\medskip

From \eqref{eq_1206}, we find that the matrix 
\[
W(t,z)=\begin{bmatrix} A(t,z) & B(t,z) \\ C(t,z) & D(t,z) \end{bmatrix} 
\]
with 
\[
A(t,z) = 
\begin{cases} 
~1, & t_0 \leq t < t_1, \\[8pt] 
~1, & t_1 \leq t < t_2, \\[8pt]   
~1+(18-4t)z^2, & t_2 \leq t \leq t_3, \\  
\end{cases}
\]
\[
B(t,z) = 
\begin{cases} 
~0, & t_0 \leq t < t_1, \\[8pt] 
~\displaystyle{\left(t-\frac{1}{2}\right)z}, & t_1 \leq t < t_2, \\[8pt]   
~4z, & t_2 \leq t \leq t_3, \\  
\end{cases}
\]
\[
C(t,z) = 
\begin{cases} 
~-tz, & t_0 \leq t < t_1, \\[8pt] 
~\displaystyle{-\frac{1}{2}z}, & t_1 \leq t < t_2, \\[8pt]   
~(2t-9)z^3-(t-4)z, & t_2 \leq t \leq t_3, \\  
\end{cases}
\]
\[
D(t,z) = 
\begin{cases} 
~1, & t_0 \leq t < t_1, \\[8pt] 
~\displaystyle{1+\frac{1}{4}(1-2t)z^2}, & t_1 \leq t < t_2, \\[8pt]   
~1-2z^2 & t_2 \leq t \leq t_3
\end{cases}
\]
solves the initial value problem \eqref{eq_1205} for $H=H_0$ 
and recovers $W_0(z)$ at $t=t_3$: 
\[
\begin{bmatrix} A(t_3,z) & B(t_3,z) \\ C(t_3,z) & D(t_3,z) \end{bmatrix} 
= \begin{bmatrix} 1 - 2z^2 & 4z \\ z^3-z & 1-2z^2 \end{bmatrix}.
\]
Let $E_0(t,z):=C(t,z) - iD(t,z)$. Then, all de Branges subspaces of 
$\mathcal{H}(E_0)$ are 
\[
\aligned 
\mathcal{H}(E(t_3,z)) & = \mathcal{H}(E_0(z)) \simeq \C^3, \\
\mathcal{H}(E(t_2,z)) & = \mathcal{H}(-z/2+(2z^2-1)) \simeq \C^2, \\
\mathcal{H}(E(t_1,z)) & = \mathcal{H}(-z/2+i) = \C, \\
\mathcal{H}(E(t_0,z)) & = \mathcal{H}(1) = \{0\}.
\endaligned 
\]

%
%
\section{$\widehat{L}^2(H)$: Hilbert spaces associated with Hamiltonians } 
%
%

\subsection{} 
Here we refer to \cite[Section 2]{Win14} and references therein 
for theoretical details on Hilbert spaces $\widehat{L}^2(H)$.   
For a Hamiltonian $H$ on $[0,L)$, 
we denote by $\mathcal{L}(H)$ 
the linear space of column vectors 
$\mathfrak{F}(t)={}^{t}[\,f(t)~~g(t)\,]$ 
of compactly supported continuous functions on $[0,L)$ 
satisfying
\[
\Vert \mathfrak{F} \Vert_{H}^2 = 
\frac{1}{\pi}
\int_{0}^{L} [\,f(t)~~g(t)\,]\,H(t) 
\begin{bmatrix} 
\overline{f(t)} \\ \overline{g(t)}
\end{bmatrix} dt < \infty 
\]
and define 
\[
\mathcal{L}^\circ(H) :=\{ \mathfrak{F} \in \mathcal{L}(H)\,|\, \Vert \mathfrak{F} \Vert_H=0\}. 
\]
Then the quotient space $\mathcal{L}(H)/\mathcal{L}^\circ(H)$ forms 
a pre-Hilbert space by the norm 
\[
\Vert \mathfrak{F} + \mathcal{L}^\circ(H) \Vert_H := \Vert \mathfrak{F} \Vert_H. 
\]
We denote by $L^2(H)$ the Hilbert space 
obtained by completing $\mathcal{L}(H)/\mathcal{L}^\circ(H)$. 
\medskip

When $H$ has indivisible intervals, 
we denote by $\widehat{L}^2(H)$ the subspace of $L^2(H)$ 
consisting of all equivalence classes represented by vectors $\mathfrak{F}(t)={}^{t}[\,f(t)~~g(t)\,]$  such that 
\[
[\,\cos\theta~~\sin\theta\,]
\begin{bmatrix}  f(t) \\ g(t) \end{bmatrix} = c_{I,\theta,\mathfrak{F}}
\]
holds for some $c_{I,\theta,\mathfrak{F}} \in \C$ 
on every indivisible $I$ of type $\theta$. 
For $\mathfrak{F} \in\widehat{L}^2(H)$, 
\[
\Vert \mathfrak{F} \Vert_{H}^2 = \sum_{I} |c_{I,\theta,\mathfrak{F}}|^2 + \frac{1}{\pi}
\int_{[0,L) \setminus \left( \bigcup_I I \right)} [\,f(t)~~g(t)\,]\,H(t) 
\begin{bmatrix} 
\overline{f(t)} \\ \overline{g(t)}
\end{bmatrix} dt.
\]
Clearly $L^2(H)=\widehat{L}^2(H)$ if $H$ has no indivisible intervals. 

\subsection{} 

For the Hamiltonian $H_0$ of \eqref{eq_1207}, 
the equivalence class of $0$ in $L^2(H_0)$ consists of vectors of the form 
\[
\mathfrak{F}^\circ(t) = 
\begin{cases} 
\displaystyle{\begin{bmatrix} f_1(t) \\ 0 \end{bmatrix}}, & t_0 \leq t < t_1, \\[16pt] 
\displaystyle{\begin{bmatrix} 0 \\ g_2(t) \end{bmatrix}}, & t_1 \leq t < t_2, \\[16pt]   
\displaystyle{\begin{bmatrix} f_3(t) \\ 0 \end{bmatrix}}, & t_2 \leq t < t_3,  
\end{cases}
\]
and the subspace $\widehat{L}^2(H_0)$ of  $L^2(H_0)$ 
consists of equivalence classes represented by vectors of the form 
\begin{equation} \label{eq_1301}
\mathfrak{F}(t) = 
\begin{cases} 
\displaystyle{\begin{bmatrix} f_1(t) \\ c_1 \end{bmatrix}}, & t_0 \leq t < t_1, \\[16pt] 
\displaystyle{\begin{bmatrix} c_2 \\ g_2(t) \end{bmatrix}}, & t_1 \leq t < t_2, \\[16pt]   
\displaystyle{\begin{bmatrix} f_3(t) \\ c_3 \end{bmatrix}}, & t_2 \leq t < t_3, \\  
\end{cases}
\end{equation}
where $c_1$, $c_2$, $c_3$ are constants 
and 
$f_1(t)$, $g_2(t)$, $f_3(t)$ are measurable functions. 
In particular, $\widehat{L}^2(H_0)$ is three-dimensional, 
since every equivalence class is uniquely determined by the constants $c_1$, $c_2$, $c_3$. 

%
%
\section{Weyl transform/de Branges transform} 
%
%

\subsection{} For the structure Hamiltonian $H$ of $E \in \mathcal{HB}$, 
we obtain 
\begin{equation*} 
K(z,w) 
= \frac{\overline{A(z)}B(w) - A(w)\overline{B(z)}}{\pi (w - \bar{z})}
= 
\frac{1}{\pi}
\int_{t_0}^{t_1} [\,A(t,w)~~B(t,w)\,]\,H(t) 
\begin{bmatrix} A(t,\bar{z}) \\ B(t,\bar{z}) \end{bmatrix} dt
\end{equation*}
from \eqref{eq_1201} in a way similar to the proof of Lemma 2.1 in \cite{Dym70}. 
The correspondence 
\[
\begin{bmatrix} A(\cdot,\bar{z}) \\ B(\cdot,\bar{z}) \end{bmatrix} ~\mapsto~ K(z,\cdot) 
\]
extends to the unitary map $\mathsf{W}:\widehat{L}^2(H) \to \mathcal{H}(E)$ defined by
\[
(\mathsf{W} \mathfrak{F})(z)
= 
\frac{1}{\pi}
\int_{t_0}^{t_1} [\,A(t,z)~~B(t,z)\,]\,H(t) 
\mathfrak{F}(t) \, dt. 
\]
This is an analogue of the Fourier transform called 
the \textit{\textbf{Weyl transform}} or the \textit{\textbf{de Branges transform}} 
(cf. Appendix~\ref{section_app_d}). 
Using an orthonormal basis $\{F_\lambda\}_{\lambda \in \Lambda}$ 
of $\mathcal{H}(E)$, the inverse transform can be written as  
\[
\aligned 
(\mathsf{W}^{-1}F)(t)
& = \int_{\R} F(z) \sum_{\lambda \in \Lambda} C(\lambda) 
\overline{F_\lambda(z)} 
\begin{bmatrix} A(t,z) \\ B(t,z) \end{bmatrix} \, \frac{dz}{|E(z)|^2} 
\endaligned 
\]
for suitable coefficients $\{C(\lambda)\}_{\lambda \in \Lambda}$ 
such that $\mathsf{W} \circ \mathsf{W}^{-1}={\rm Id}_{\mathcal{H}(E)}$ holds. 
Alternatively, using the isomorphism $r_\Theta:\mathcal{H}(E) \to L^2(d\mu_{\Theta})$, 
\[
\aligned 
(\mathsf{W}^{-1}F)(t)
& = \int_{-\infty}^{\infty} 
F(\lambda) \begin{bmatrix} A(t,\lambda) \\ B(t,\lambda) \end{bmatrix} 
d \mu_{\Theta}(\lambda)
\endaligned 
\]
\cite[p. 533]{Win14}. 

\subsection{} 

The integral formula defining the Weyl transform
extends naturally to a map $\mathsf{W}: L^2(H_0) \to \mathcal{H}(E_0)$ as follows 
\[
\aligned 
(\mathsf{W}\mathfrak{F})(z)
& = \frac{1}{\pi} \int_{0}^{5} [C(s,z)~D(s,z)] \,H_0(s) 
\begin{bmatrix} f_1(s) \\ f_2(s) \end{bmatrix} \, ds \\
& = 
\frac{1}{\pi}\left(
\int_{0}^{1/2} f_2(s) \,ds 
-\frac{z}{2} \int_{1/2}^{1/2+4} f_1(s) \,ds 
+(1-2z^2) \int_{1/2+4}^{1/2+4+1/2} f_2(s) \,ds
\right).
\endaligned
\]
The images of the vectors
\[
\begin{bmatrix} C(t,0) \\ D(t,0) \end{bmatrix},~
\begin{bmatrix} C(t,1) \\ D(t,1) \end{bmatrix}, ~
\begin{bmatrix} C(t,-1) \\ D(t,-1) \end{bmatrix} 
\]
are 
\begin{equation*}
\aligned 
\mathsf{W}\begin{bmatrix} C(t,0) \\ D(t,0) \end{bmatrix} (z)
& = - \frac{1}{\sqrt{\pi}} F_0(z), \\
\mathsf{W}\begin{bmatrix} C(t,1) \\ D(t,1) \end{bmatrix} (z)
& = \sqrt{\frac{2}{\pi}} F_1(z) , \\
\mathsf{W}\begin{bmatrix} C(t,-1) \\ D(t,-1) \end{bmatrix} (z)
& = \sqrt{\frac{2}{\pi}} F_{-1}(z).
\endaligned 
\end{equation*}
Therefore, the image of $\mathsf{W}$ is three-dimensional 
and hence surjective, but not injective. 
Moreover, the above shows that the restriction of $\mathsf{W}$ to $\widehat{L}^2(H_0)$ is bijective.  
\smallskip

The inverse transform $\mathsf{W}^{-1}: \mathcal{H}(E_0) \to \widehat{L}^2(H_0)$ is given by 
\[
\aligned 
(\mathsf{W}^{-1}F)(t) 
& = \int_{-\infty}^{\infty} 
F(\gamma) \begin{bmatrix} C(t,\gamma) \\ D(t,\gamma) \end{bmatrix} 
d \mu_{\Theta_0}(\gamma) \\
& = 
\pi 
\left( F(0) \begin{bmatrix} C(t,0) \\ D(t,0) \end{bmatrix} 
+ \frac{F(1)}{2} \begin{bmatrix} C(t,1) \\ D(t,1) \end{bmatrix} 
+ \frac{F(-1)}{2} \begin{bmatrix} C(t,-1) \\ D(t,-1) \end{bmatrix} 
\right). 
\endaligned 
\]
The image of the orthonormal basis $\{F_0,F_1,F_{-1}\}$ of $\mathcal{H}(E_0)$ 
in \eqref{eq_901} is
\begin{equation} \label{eq_1401}
\aligned 
\mathfrak{F}_0(t)
& = (\mathsf{W}^{-1}F_0)(t)
 = - \sqrt{\pi} \begin{bmatrix} C(t,0) \\ D(t,0) \end{bmatrix}, \\
\mathfrak{F}_1(t)
& = (\mathsf{W}^{-1}F_1)(t)
 = \sqrt{\frac{\pi}{2}}  \begin{bmatrix} C(t,1) \\ D(t,1) \end{bmatrix}, \\
\mathfrak{F}_{-1}(t)
& = (\mathsf{W}^{-1}F_{-1})(t)
 = \sqrt{\frac{\pi}{2}}  \begin{bmatrix} C(t,-1) \\ D(t,-1) \end{bmatrix}.
\endaligned 
\end{equation}
We can directly confirm that 
$\{\mathfrak{F}_0, \, \mathfrak{F}_1, \, \mathfrak{F}_{-1} \}$ 
is an orthonormal basis of $\widehat{L}^2(H_0)$ 
and $\mathsf{W}(\mathfrak{F}_k)=F_k$ for $k=0,1,-1$.  
\medskip

Let $\mathfrak{f}_1$, $\mathfrak{f}_2$, $\mathfrak{f}_3$ 
be vectors in $\widehat{L}^2(H_0)$ 
corresponding to the constants $(c_1,c_2,c_3)=(1,0,0),\,(0,1,0),\,(0,0,1)$, respectively, in \eqref{eq_1301}. 
Then, they form a basis of $\widehat{L}^2(H_0)$ and 
\[
(\mathsf{W}\,\mathfrak{f}_1)(z)=\frac{1}{2\pi}, \quad 
(\mathsf{W}\,\mathfrak{f}_2)(z)=\frac{-2z}{\pi}, \quad 
(\mathsf{W}\,\mathfrak{f}_3)(z)=\frac{1-2z^2}{2\pi}. 
\]

%
%
\section{Choice of basis for $\mathcal{H}(G_g)$} 
%
%

\subsection{} 

We already have a basis in $L^2(\tau_0)$ 
corresponding to eigenfunctions \eqref{eq_901} of 
a self-adjoint extension $\mathsf{M}_{\pi/2}$ on $\mathcal{H}(E_0)$ by the following series of isometries: 
\[
\left( \widehat{L}^2(H_0)~\overset{\mathsf{W}}{\longrightarrow}\right)~
\mathcal{H}(E_0)~\overset{\times E^{-1}}{\longrightarrow}~
\mathcal{K}(\Theta_0)~\overset{\rm rest.}{\longrightarrow}~
L^2(d\mu_\Theta) ~\overset{\times \sqrt{\pi}}{\longrightarrow}~ 
L^2(\tau_0). 
\]
To define the map $\mathsf{L}_0:\mathcal{H}(G_{g_0}) \to \widehat{L}^2(H_0)$ 
that forms part of the commutative diagram \eqref{eq_1101}, 
we choose a basis of $\mathcal{H}(G_{g_0})$ such that 
the image of 
\[
\mathcal{H}(G_{g_0})~\overset{\Phi_1}{\longrightarrow}~
L^2(\tau_0)
\]
matches the image of eigenfunctions of $\mathsf{M}_{\pi/2}$ above. 

\subsection{} \label{section_15_2}

Let $\alpha_1$, $\alpha_2$, $\alpha_3$ be the roots of $E_0(z)$: 
\[
E_0(z)=z^3+2iz^2-z-i=(z-\alpha_1)(z-\alpha_2)(z-\alpha_3),
\] 
\[
\{\alpha_1,\alpha_2, \alpha_3\} = \{ -i \cdot 1.75488...,~ \pm 0.744862 - i \cdot 0.122561... \}.
\]
Then, we obtain the partial fraction decompositions 
\[
\aligned 
\frac{z^2-1}{E_0(z)} 
& =
\frac{\alpha_1^2-1}{(\alpha_1-\alpha_2)(\alpha_1-\alpha_3)}\cdot
\frac{1}{z-\alpha_1} \\
& \qquad \quad 
+\frac{\alpha_2^2-1}{(\alpha_2-\alpha_1)(\alpha_2-\alpha_3)}\cdot
\frac{1}{z-\alpha_2} \\
& \qquad \qquad \quad 
+\frac{\alpha_3^2-1}{(\alpha_3-\alpha_1)(\alpha_3-\alpha_2)}\cdot
\frac{1}{z-\alpha_3},
\endaligned 
\]
\[
\aligned 
\frac{z^2+z}{E_0(z)} 
& =
\frac{\alpha_1^2+\alpha_1}{(\alpha_1-\alpha_2)(\alpha_1-\alpha_3)}\cdot
\frac{1}{z-\alpha_1} \\
& \qquad \quad 
+\frac{\alpha_2^2+\alpha_2}{(\alpha_2-\alpha_1)(\alpha_2-\alpha_3)}\cdot
\frac{1}{z-\alpha_2} \\
& \qquad \qquad \quad 
+\frac{\alpha_3^2+\alpha_3}{(\alpha_3-\alpha_1)(\alpha_3-\alpha_2)}\cdot
\frac{1}{z-\alpha_3},
\endaligned 
\]
\[
\aligned 
\frac{z^2-z}{E_0(z)} 
& =
\frac{\alpha_1^2-\alpha_1}{(\alpha_1-\alpha_2)(\alpha_1-\alpha_3)}\cdot
\frac{1}{z-\alpha_1} \\
& \qquad \quad 
+\frac{\alpha_2^2-\alpha_2}{(\alpha_2-\alpha_1)(\alpha_2-\alpha_3)}\cdot
\frac{1}{z-\alpha_2} \\
& \qquad \qquad \quad 
+\frac{\alpha_3^2-\alpha_3}{(\alpha_3-\alpha_1)(\alpha_3-\alpha_2)}\cdot
\frac{1}{z-\alpha_3}.
\endaligned 
\]
In view of these decompositions, 
we define $\psi_0(t)$, $\psi_1(t)$, and $\psi_{-1}(t)$ by 
\[
\aligned 
\psi_0(t)
& = (-i)\left[
\frac{\alpha_1^2-1}{(\alpha_1-\alpha_2)(\alpha_1-\alpha_3)}\cdot
\exp(-i \alpha_1 t) 
\right. \\
& \qquad \qquad \quad 
+\frac{\alpha_2^2-1}{(\alpha_2-\alpha_1)(\alpha_2-\alpha_3)}\cdot
\exp(-i \alpha_2 t) \\
& \qquad \qquad \qquad \quad \left.
+\frac{\alpha_3^2-1}{(\alpha_3-\alpha_1)(\alpha_3-\alpha_2)}\cdot
\exp(-i \alpha_3 t)\right], 
\endaligned 
\]
\[
\aligned 
\psi_1(t)
& = \frac{(-i)}{\sqrt{2}}\left[
\frac{\alpha_1^2+\alpha_1}{(\alpha_1-\alpha_2)(\alpha_1-\alpha_3)}\cdot
\exp(-i \alpha_1 t) \right.\\
& \qquad \qquad \quad 
+\frac{\alpha_2^2+\alpha_2}{(\alpha_2-\alpha_1)(\alpha_2-\alpha_3)}\cdot
\exp(-i \alpha_2 t) \\
& \qquad \qquad \qquad \quad \left.
+\frac{\alpha_3^2+\alpha_3}{(\alpha_3-\alpha_1)(\alpha_3-\alpha_2)}\cdot
\exp(-i \alpha_3 t)
\right],
\endaligned 
\]
\[
\aligned 
\psi_{-1}(t)
& = \frac{(-i)}{\sqrt{2}}\left[
\frac{\alpha_1^2-\alpha_1}{(\alpha_1-\alpha_2)(\alpha_1-\alpha_3)}\cdot
\exp(-i \alpha_1 t) \right.\\
& \qquad \qquad \quad 
+\frac{\alpha_2^2-\alpha_2}{(\alpha_2-\alpha_1)(\alpha_2-\alpha_3)}\cdot
\exp(-i \alpha_2 t) \\
& \qquad \qquad \qquad \quad \left.
+\frac{\alpha_3^2-\alpha_3}{(\alpha_3-\alpha_1)(\alpha_3-\alpha_2)}\cdot
\exp(-i \alpha_3 t)
\right]
\endaligned 
\]
for nonnegative $t$, 
and $\psi_0(t)=\psi_1(t)=\psi_{-1}(t)=0$ for negative $t$. 
Then, $\psi_0$, $\psi_1$, and $\psi_{-1}$ all belong to 
$L^2(0,\infty) \cap L^1(0,\infty) \cap C^\infty((0,\infty))$ and 
\[
\widehat{\psi_0}(z) = \frac{1+\Theta_0(z)}{z}, 
\quad 
\widehat{\psi_1}(z) = \frac{1}{\sqrt{2}}\frac{1+\Theta_0(z)}{z-1}, 
\quad
\widehat{\psi_{-1}}(z) = \frac{1}{\sqrt{2}}\frac{1+\Theta_0(z)}{z+1}. 
\]
Therefore, if we define $\phi_k(t):=i\psi_k^\prime(t)$ ($k=0,1,-1$),   
\[
\aligned 
\phi_0(t)
& = \delta_0(t) \\ 
& \quad +\left[
\frac{\alpha_1^2-1}{(\alpha_1-\alpha_2)(\alpha_1-\alpha_3)}\cdot
(-i\alpha_1)\exp(-i \alpha_1 t) \right.\\
& \qquad \qquad \quad \quad \left.
+\frac{\alpha_2^2-1}{(\alpha_2-\alpha_1)(\alpha_2-\alpha_3)}\cdot
(-i\alpha_2)\exp(-i \alpha_2 t)\right. \\
& \qquad \qquad \qquad \qquad \left.
+\frac{\alpha_3^2-1}{(\alpha_3-\alpha_1)(\alpha_3-\alpha_2)}\cdot
(-i\alpha_3)\exp(-i \alpha_3 t)\right], 
\endaligned 
\]
\[
\aligned 
\phi_1(t)
& = \frac{1}{\sqrt{2}}\,\delta_0(t) \\
& \quad+ \frac{1}{\sqrt{2}}\left[
\frac{\alpha_1^2+\alpha_1}{(\alpha_1-\alpha_2)(\alpha_1-\alpha_3)}\cdot
(-i\alpha_1)\exp(-i \alpha_1 t) \right.\\
& \qquad \qquad \quad \quad \left.
+\frac{\alpha_2^2+\alpha_2}{(\alpha_2-\alpha_1)(\alpha_2-\alpha_3)}\cdot
(-i\alpha_2)\exp(-i \alpha_2 t) \right.\\
& \qquad \qquad \qquad \qquad \left.
+\frac{\alpha_3^2+\alpha_3}{(\alpha_3-\alpha_1)(\alpha_3-\alpha_2)}\cdot
(-i\alpha_3)\exp(-i \alpha_3 t)
\right], 
\endaligned 
\]
\[
\aligned 
\phi_{-1}(t)
& = \frac{1}{\sqrt{2}}\,\delta_0(t) \\
& \quad + \frac{1}{\sqrt{2}}\left[
\frac{\alpha_1^2-\alpha_1}{(\alpha_1-\alpha_2)(\alpha_1-\alpha_3)}\cdot
(-i\alpha_1)\exp(-i \alpha_1 t) \right.\\
& \qquad \qquad \quad \quad \left.
+\frac{\alpha_2^2-\alpha_2}{(\alpha_2-\alpha_1)(\alpha_2-\alpha_3)}\cdot
(-i\alpha_2)\exp(-i \alpha_2 t)\right.\\
& \qquad \qquad \qquad \qquad \left.
+\frac{\alpha_3^2-\alpha_3}{(\alpha_3-\alpha_1)(\alpha_3-\alpha_2)}\cdot
(-i\alpha_3)\exp(-i \alpha_3 t)
\right],
\endaligned 
\]
then 
\[
\widehat{\phi_0}(z) = z\cdot\frac{1+\Theta(z)}{z}, 
\quad 
\widehat{\phi_1}(z) = z\cdot\frac{1}{\sqrt{2}}\frac{1+\Theta(z)}{z-1}, 
\quad
\widehat{\phi_{-1}}(z) = z\cdot\frac{1}{\sqrt{2}} \frac{1+\Theta(z)}{z+1}
\]
holds. Hence, 
\[
\Phi_1(\phi_0,z) = \sqrt{\pi}\,\frac{F_0(z)}{E_0(z)}, 
\quad 
\Phi_1(\phi_1,z) = \sqrt{\pi}\,\frac{F_1(z)}{E_0(z)}, 
\quad 
\Phi_1(\phi_{-1},z) = \sqrt{\pi}\,\frac{F_{-1}(z)}{E_0(z)}.
\]

\subsection{} 

We define the linear map $\mathsf{L}_0:\mathcal{H}(G_{g_0}) \to \widehat{L}^2(H_0)$ 
so that the basis $\{\phi_0,\,\phi_1,\,\phi_{-1}\}$ of $\mathcal{H}(G_g)$ 
in Section \ref{section_15_2} 
corresponds to the basis
\[
\left\{
\kappa_0 \begin{bmatrix} C(t,0) \\ D(t,0) \end{bmatrix},~
\kappa_1 \begin{bmatrix} C(t,1) \\ D(t,1) \end{bmatrix},~
\kappa_{-1} \begin{bmatrix} C(t,-1) \\ D(t,-1) \end{bmatrix} 
\right\}
\]
of $\widehat{L}^2(H_0)$ isometrically for some constants $\kappa_0$, $\kappa_1$, and $\kappa_{-1}$. 
To this end, we set 
\begin{equation} \label{eq_1501}
\aligned 
L_0(t,s)
& = \pi  \, \left\{ -\left(\lim_{z\to0} \frac{e^{izs}-1}{z} \right) 
\begin{bmatrix} C(t,0) \\ D(t,0) \end{bmatrix} \right. \\
& \qquad \qquad \left.
+ \frac{1}{2}\cdot\frac{e^{is}-1}{1} 
\begin{bmatrix} C(t,1) \\ D(t,1) \end{bmatrix}
+ \frac{1}{2}\cdot\frac{e^{-is}-1}{(-1)} 
\begin{bmatrix} C(t,-1) \\ D(t,-1) \end{bmatrix}
\right\} \\
& = \pi  \, \left\{ -is \begin{bmatrix} C(t,0) \\ D(t,0) \end{bmatrix} 
+ \frac{1}{2}(e^{is}-1)
\begin{bmatrix} C(t,1) \\ D(t,1) \end{bmatrix}
- \frac{1}{2}(e^{-is}-1)
\begin{bmatrix} C(t,-1) \\ D(t,-1) \end{bmatrix}
\right\} . 
\endaligned 
\end{equation}
Using the kernel $L_0(t,s)$, we define 
\[
\mathsf{L}_0: \mathcal{H}(G_{g_0})~\to~\widehat{L}^2(H_0), 
\quad 
(\mathsf{L}_0\phi)(t) := \int_{-\infty}^{\infty} L_0(t,s) \phi(s) \, ds. 
\]
Then, it is isometric and the image of the basis $\{\phi_0,\,\phi_1,\,\phi_{-1}\}$ is 
\[
\aligned
(\mathsf{L}_0 \phi_0)(t) 
& = \sqrt{\pi}\begin{bmatrix} C(t,0) \\ D(t,0) \end{bmatrix} = -(\mathsf{W}^{-1}F_0)(t), \\
(\mathsf{L}_0 \phi_1)(t)
& = \sqrt{\frac{\pi}{2}}\begin{bmatrix} C(t,1) \\ D(t,1) \end{bmatrix} =(\mathsf{W}^{-1}F_1)(t), \\
(\mathsf{L}_0 \phi_{-1})(t) 
& = \sqrt{\frac{\pi}{2}}\begin{bmatrix} C(t,-1) \\ D(t,-1) \end{bmatrix}=(\mathsf{W}^{-1}F_{-1})(t)
\endaligned 
\]
by \eqref{eq_1401}.
By definition \eqref{eq_1501}, 
\[
\aligned 
(\mathsf{L}_0 \phi)(t)
& = \pi  \, \left( -\widehat{\phi}'(0) \begin{bmatrix} C(t,0) \\ D(t,0) \end{bmatrix} 
+ \frac{1}{2}\widehat{\phi}(1)
\begin{bmatrix} C(t,1) \\ D(t,1) \end{bmatrix}
- \frac{1}{2} \widehat{\phi}(-1)
\begin{bmatrix} C(t,-1) \\ D(t,-1) \end{bmatrix}
\right)
\endaligned 
\]
for $\phi \in \mathcal{H}(G_g)$ (since $\widehat{\phi}(0)=0$), and therefore
\[
\aligned 
(\mathsf{W}\mathsf{L}_0 \phi)(z)
& = \sqrt{\pi}\widehat{\phi}'(0) F_0(z)
+ \sqrt{\frac{\pi}{2}} \widehat{\phi}(1)
F_1(z)
- \sqrt{\frac{\pi}{2}} \widehat{\phi}(-1)
F_{-1}(z) \\
& = E(z)\widehat{\mathcal{P}_\phi}(z)
\endaligned 
\]
by \eqref{eq_1401}. Hence, we obtain \eqref{eq_1101}. 
\medskip

On the other hand, 
\[
\langle \phi_k,\,\phi_l \rangle_{G_g} = \frac{1}{\pi} \,\delta_{k,l}, \quad k,l\in\{0,1,-1\}
\]
holds by \eqref{eq_501} and \eqref{eq_702}, and therefore
\[
J(t,s) = \pi (\phi_0(t) \overline{\phi_0(s)}
+ \phi_1(t) \overline{\phi_1(s)}
+ \phi_{-1}(t)\overline{\phi_{-1}(s)} )
\]
provides the reproducing kernel of $\mathcal{H}(G_g)$. 

%
%
\section{Summary, II.} \label{section_16}
%
%

We have now obtained the following commutative diagram consisting entirely of isometric isomorphisms: 
\[
\xymatrix{
&&\widehat{L}^2(H_0)\ar[rrdd]^-{\mathsf{W}}&&\\\\
\mathcal{H}(G_{g_0}) \ar[rruu]^-{\mathsf{L}_0}
\ar[dd]_-{\Phi_1}
\ar[rrrr]^-{\frac{1}{\sqrt{\pi}}E_0\widehat{\mathcal{P}}}
\ar@{}[rrrruu]|{\circlearrowright}
\ar@{}[rrrrdd]|{\circlearrowright}&&&&\mathcal{H}(E_0)\simeq\mathcal{K}(\Theta_0) 
\ar[dd]^-{r_{\Theta_0}} \\
&&&&\\
L^2(\tau_0) \ar[rrrr]_-{\times \frac{1}{\sqrt{\pi}}}&&&& L^2(d\mu_{\Theta_0}) 
}
\]
In addition,
 we also obtain the following correspondence of the quantities 
to define the objects of the above commutative diagram: 
\[
\xymatrix{
&&H_0 &&\\\\
g_0 \ar@{-->}[rruu]^-{}
\ar[dd]_-{\text{formula \eqref{eq_401}}}
\ar[rrrr]^-{\text{formula \eqref{eq_602}}}
&&&& 
Q_0 \Leftrightarrow \Theta_0 \Leftrightarrow E_0
\ar[dd]^-{\text{formula \eqref{eq_701}}} 
\ar[lluu]_-{\text{\qquad structure Hamiltonian}}
\\
&&&&\\
\tau_0 \ar[rrrr]_-{\times \pi}&&&& \mu_{\Theta_0}
}
\]
A direct correspondence between the screw function $g_0$ 
and the structure Hamiltonian $H_0$ does not seem to be available in any simple form.

%
%
\section{Applications to analytic number theory} \label{section_17}
%
%

The original motivation for preparing this article was the author's work \cite{Su23a}, 
which proposed an application of screw functions to the study of the Riemann zeta-function $\zeta(s)$. 
The simplest screw discussed throughout this article may be viewed as a toy model 
for the analytic structures arising from the screw function $g_\zeta$ 
associated with the Riemann zeta-function.
The explicit calculations presented in the previous sections 
provided a useful guide for formulating and investigating
the corresponding analytic structures attached to $g_\zeta$. 
Since screw functions and their associated analytic theories 
are not widely known in analytic number theory, 
it seemed useful to provide a concrete introduction 
through the simplest screw and its associated screw function. Rather than emphasizing abstract generalities, 
the aim was to illustrate the theory by explicitly 
computing the analytic objects attached to this fundamental example. 

Since the first version of this article was written, 
several developments have taken place in this direction. 
We conclude this article with a brief overview of applications of 
screw functions to analytic number theory and related topics.
\medskip

The Riemann zeta-function occupies a central position in analytic number theory.
Among the many problems surrounding it, the Riemann Hypothesis (RH),
which asserts that every nontrivial zero of $\zeta(s)$ lies on the line
$\Re(s)=1/2$, remains one of the most famous open problems in mathematics.
The connection with the Riemann zeta-function begins with the function
\begin{equation} \label{EQ_103}
\aligned 
g_\zeta(t) 
& = -4(e^{t/2}+e^{-t/2}-2) + \sum_{n \leq \exp(|t|)} \frac{\Lambda(n)}{\sqrt{n}}(|t|-\log n)  \\
& \quad - \frac{|t|}{2}( \psi(1/4) - \log \pi )
- \frac{1}{4}\left( \Phi(1,2,1/4) - e^{-|t|/2}\Phi(e^{-2|t|},2,1/4) \right),  
\endaligned 
\end{equation}
where $\Lambda(n)$ denotes the von Mangoldt function, 
defined as $\Lambda(n)=\log p$ if $n=p^k$ for a prime number $p$ with $k \in \Z_{>0}$,  
and $\Lambda(n)=0$ otherwise,  
$\psi(s)$ is the digamma function 
and $\Phi(z,s,a) = \sum_{n=0}^{\infty} (n+a)^{-s}z^n$ 
is the Hurwitz--Lerch zeta function. 

As shown in \cite[Theorem 1.2]{Su23a}, 
the function $g_\zeta$ in \eqref{EQ_103} is a screw function on $\R$ 
if and only if RH holds. 
The significance of the function $g_\zeta$ from the viewpoint of the present article is that 
it plays a role analogous to that of the screw function $g_0$ associated with the simplest screw. 
The constructions developed throughout the previous sections therefore suggest 
a collection of analytic objects naturally attached to $g_\zeta$, 
including spectral measures, reproducing kernel Hilbert spaces, 
de Branges spaces, and canonical systems.

Assuming RH, these objects fit naturally into a framework closely parallel to that 
developed for $g_0$ in the previous sections. 
Remarkably, many of them can be constructed directly from the explicit formula \eqref{EQ_103}, 
even without assuming RH. This observation has led to a number of new analytic structures 
associated with $\zeta(s)$ and has opened several new directions of investigation.

The starting point of this line of research was \cite{Su23a}, 
where it was proved that $g_\zeta$ is a screw function on $\R$ if and only if RH holds. 
That paper also showed that, for every $a>0$, 
the compression $G_{g_\zeta}[a]$ of $G_{g_\zeta}$ to the interval $(-a,a)$ 
has no zero eigenvalue if and only if RH holds. 
In addition, it was shown that $G_{g_\zeta}[a]$ is of trace class, 
and several properties of the (weighted) moments of $g_\zeta$ were investigated.
\medskip

The discussion of the previous sections suggests that, 
assuming RH, the commutative diagram of Section~\ref{section_16} 
should admit the following analogue associated with $g_\zeta$:
\[
\xymatrix{
&&\widehat{L}^2(H_\zeta)\ar[rrdd]^-{\mathsf{W}}&&\\\\
\mathcal{H}(G_{g_\zeta}) \ar[rruu]^-{\mathsf{L}_\zeta}
\ar[dd]_-{\Phi_1}
\ar[rrrr]^-{\frac{1}{\sqrt{\pi}}E_\zeta\widehat{\mathcal{P}}}
\ar@{}[rrrruu]|{\circlearrowright}
\ar@{}[rrrrdd]|{\circlearrowright}&&&&\mathcal{H}(E_\zeta)\simeq\mathcal{K}(\Theta_\zeta) 
\ar[dd]^-{r_{\Theta_\zeta}} \\
&&&&\\
L^2(\tau_\zeta) \ar[rrrr]_-{\times \frac{1}{\sqrt{\pi}}}&&&& L^2(d\mu_{\Theta_\zeta}) 
}.
\] 
Subsequent work has established most parts of this diagram, 
with the principal remaining gap being the construction of
\[
\mathsf{L}_\zeta :
\mathcal H(G_{g_\zeta})
~\to~
\widehat L^2(H_\zeta).
\]
It should be noted, however, that some issues concerning the construction of $\widehat L^2(H_\zeta)$ itself still remain to be clarified.
On the other hand, 
once the spaces $\mathcal H(G_{g_\zeta})$ and $\mathcal H(E_\zeta)$ are constructed, 
the maps $\Phi_1$ and $r_{\Theta_\zeta}$ arise naturally from the general theory. 
Thus, the essential problems are the construction of
\[
\text{(i)}~\mathcal H(G_{g_\zeta})
~\to~
\mathcal H(E_\zeta)
\quad \text{and} \quad 
\text{(ii)}~\widehat L^2(H_\zeta)
~\to~
\mathcal H(E_\zeta).
\]
Strictly speaking, the construction of (i) also relies on the intermediate spaces $L^2(\tau_\zeta)$ and $L^2(d\mu_{\Theta_\zeta})$. Nevertheless, once these spaces are available, the principal task is to construct the map between $\mathcal H(G_{g_\zeta})$ and $\mathcal H(E_\zeta)$. 
The first of these was achieved in \cite{Su26a}, while the second remains incomplete at present.

The map in (i) was constructed in \cite{Su26a}.
More precisely, let
\[
\xi(s):=s(s-1)\pi^{-s/2}\Gamma(s/2)\zeta(s),
\]
and define
\[
E_\zeta(z)
:=
\xi(1/2-iz)+\xi'(1/2-iz).
\]
The main result of \cite{Su26a} gives an isomorphism between
the de Branges space $\mathcal H(E_\zeta)$
and a Hilbert space $\mathcal H_W$
defined as the completion of $C_c^\infty(\R)$
with respect to a sesquilinear form
$\langle\cdot,\cdot\rangle_W$
satisfying
\[
\langle \phi,\phi\rangle_{g_\zeta,\infty}
=
\langle \psi,\psi\rangle_W,
\quad
\phi=\psi',
\quad
\psi\in C_c^\infty(\R).
\]
By construction,
$\mathcal H_W$ is canonically isometrically isomorphic to 
$\mathcal H(G_{g_\zeta})$.
Motivated by this correspondence,
the variational problem associated with the quadratic form
\[
\langle \phi,\phi\rangle_{g_\zeta,a}
\]
on $(-a,a)$, defined by \eqref{eq_501},
was investigated in \cite{Su26b}
without assuming RH.

The map in (ii) is expected to arise by applying the method developed in \cite{Su25a} 
for constructing the structure Hamiltonian of $\mathcal{K}(\Theta)$ 
from a meromorphic inner function $\Theta$ to
\[
\Theta_\zeta(z)=\frac{E_\zeta^\sharp(z)}{E_\zeta(z)}.
\]
However, it has not yet been verified whether $\Theta_\zeta$ satisfies 
the conditions required in \cite{Su25a}.
\medskip

The developments described above concern
the Hilbert-space structures suggested by the analogue
of the commutative diagram of Section~\ref{section_16}.
There are, however, several other directions in which
screw functions have found applications in analytic number theory.

In \cite{NaSu23a},
it was shown that RH is equivalent to the assertion that
$\exp(g_\zeta(t))$
is the characteristic function of an infinitely divisible distribution
(cf.\ Appendix~\ref{section_app_b}).
This observation is based on the representation
\[
g_\zeta(t) = \sum_\gamma \frac{e^{-i\gamma t}-1}{\gamma^2}
\]
which, under RH, is precisely the
L{\'e}vy--Khintchine formula for an infinitely divisible distribution.

The viewpoint developed for the Riemann zeta-function
is not restricted to a single zeta-function.
In \cite{Su25b},
the screw function $g_\zeta$
was generalized to screw functions associated with
Dirichlet series in the Selberg class,
and analogues of several results for $g_\zeta$
were proposed in this broader setting.
Furthermore, \cite{MaSu26}
investigated connections between screw functions
and certain finite sums arising in the study of
Goldbach-type problems.

Although many aspects of the picture remain incomplete,
the developments described above suggest that
screw functions provide a useful framework
for connecting harmonic analysis,
operator theory,
probability theory,
and analytic number theory.
The simplest screw discussed throughout this article
thus continues to serve as a useful prototype
for a broader theory linking these subjects.

The author hopes that the present exposition
will provide a convenient point of entry
to these developments for readers from both analysis
and analytic number theory.

\bigskip

\appendix

%
%
\section{Kre\u{\i}n's string} 
%
%

\subsection{} 
Here we refer to Kaltenb\"{a}ck--Winkler--Woracek 
\cite[Section 2]{KWW07}, 
Kotani--Watanabe \cite{KoWa82}, 
Langer--Winkler \cite{LaWin98}, 
and references therein. 
Kre\u{\i}n's string $S[m, L]$ consists of 
$0<L \leq \infty$ 
and a 
right-continuous 
nondecreasing 
nonnegative function $m(x)$ on $[0,L)$ such that $m(0_-)=0$. 
For $S[m, L]$, we take solutions $\phi(x,z)$ and $\psi(x,z)$ 
of the string equation 
\[
dy'(x) + z y(x) dm(x)=0
\] 
on $[0,L)$ satisfying the initial condition 
\[
\phi'(0,z)=\psi(0,z)=0, \quad \phi(0,z)=\psi'(0,z)=1,
\] 
more precisely, 
\begin{equation} \label{eq_A01}
\aligned 
\phi(x,\lambda)&=1-\lambda \int_{0}^{x} (x-y)\phi(y,\lambda) \,dm(y), \\
\psi(x,\lambda)&=x-\lambda \int_{0}^{x} (x-y)\psi(y,\lambda) \,dm(y).
\endaligned 
\end{equation}
Then the Titchmarsh--Weyl function 
\[
q(z) = \lim_{x \to L} \frac{\psi(x,z)}{\phi(x,z)}
\] 
exists and belongs to the subclass $\mathcal{N}_S$ 
of the Nevanlinna class $\mathcal{N}$ consisting of $q(z)$ such that 
\begin{equation} \label{eq_A02}
q(z) = b + \int_{0}^{\infty} \frac{d\sigma(\lambda)}{\lambda-z} 
\end{equation}
for some $b \geq 0$ and a measure $\sigma$ on $[0,\infty)$ 
with $\int_{0}^{\infty} d\sigma(\lambda)/(\lambda+1) < \infty$. 
Then,  
\begin{equation} \label{eq_A03}
b=q(-\infty)=\inf\,{\rm supp}\,(dm), 
\quad 
L=q(0_{-})=b+\int_{0}^{\infty} \frac{d\sigma(\lambda)}{\lambda}. 
\end{equation}
Kre\u{\i}n \cite{Kr52} proved that the correspondence 
$S[m,L] \to \mathcal{N}_S$ is bijective. 

\subsection{} 

For $Q_0(z)$ in \eqref{eq_604}, 
we have 
\[
q_0(z) = \frac{Q_0(\sqrt{z})}{\sqrt{z}} = \frac{1}{0-z} + \frac{1}{1-z} 
= \cfrac{1}{(-1/2)z+\cfrac{1}{4+\cfrac{1}{(-1/2)z}}}.
\]
This shows that $q_0$ belongs to $\mathcal{N}_S$, $b=0$, and $L=\infty$ 
by \eqref{eq_A02} and \eqref{eq_A03}. 
The above continued fraction expansion implies that 
the measure corresponding to $q_0$ satisfies
\[
dm_0(x) = \frac{1}{2}\delta(x) + \frac{1}{2}\delta(x-4)
\]
by \cite[Example 1.1]{KoWa82}. 
Therefore, Kre\u{\i}n's string corresponding to $q_0$ is determined by the mass distribution function 
\[
m_0(x) =
\begin{cases}
~0, & x <0, \\
~1/2, & 0<x <4, \\
~1, & x >4
\end{cases}
\]
on $[0,\infty)$ and  has mass only at points $x=0$ and $x=4$. 
The string equation \eqref{eq_A01} for $S[m_0,\infty]$ 
can be written as 
\[
\aligned 
\phi(x,\lambda)
&=0 \quad (x<0), \\ 
&=1 \quad (x=0), \\ 
&=1-\frac{1}{2}\lambda (x-0) \phi(0,\lambda) \quad (0<x<4) \\
&=1-\frac{1}{2}\lambda (x-0) \phi(0,\lambda) -\frac{1}{2}\lambda (x-4) \phi(4,\lambda) \quad (x \geq 4), \\
\psi(x,\lambda)
&=0 \quad (x \leq 0), \\
&=x -\frac{1}{2}\lambda(x-0) \psi(0,\lambda) \quad (0<x<4) \\
&=x -\frac{1}{2}\lambda(x-0) \psi(0,\lambda) -\frac{1}{2}\lambda(x-4) \psi(4,\lambda) \quad (x \geq 4). 
\endaligned 
\]
Using the initial condition $\phi(0,\lambda)=1$ and $\psi(0,\lambda)=0$, 
these equations can be solved for $\phi(4,\lambda)$ and $\psi(4,\lambda)$, 
yielding 
$\phi(4,\lambda)=1-2\lambda$, 
$\psi(4,\lambda)=4$, and 
\[
\aligned 
\phi(x,\lambda) 
&= 
\begin{cases}
~0, & x< 0, \\
~1, & x=0, \\
~ \displaystyle{1-\frac{1}{2}\,\lambda x}, & 0<x <4, \\
~ \displaystyle{\lambda(\lambda-1)\,x -4\lambda^2 +2\lambda+ 1}, & x \geq 4, 
\end{cases} \\
\psi(x,\lambda) 
&= 
\begin{cases}
~0, & x \leq  0, \\
~x, & 0<x<4, \\
~\displaystyle{(1-2\lambda)\,x+8\lambda }, & x \geq 4.
\end{cases}
\endaligned 
\]
The defining limit
\[
\lim_{x \to L} \frac{\psi(x,\lambda)}{\phi(x,\lambda)}
=
q_0(\lambda) \quad (L=\infty)
\]
is easily confirmed from the above explicit expression for solutions. 

%
%
\section{Infinitely divisible distributions} \label{section_app_b}
%
%

\subsection{} 
The integral representation of screw functions \eqref{eq_401}--\eqref{eq_402} 
makes clear the relation between screw functions and infinitely divisible distributions.
A distribution (or probability measure) $\mu$ on $\R$ is infinitely divisible 
if there exists a distribution $\mu_n$ on $\R$ 
such that 
$\mu=\mu_n \ast \dots \ast \mu_n$ ($n$-fold) for every positive integer $n$. 
For any infinitely divisible distribution $\mu$, 
there exists a triplet $(a,b,\nu)$ 
consisting of $a \in \R_{\geq 0}$, $b \in \R$ 
and a measure $\nu$ on $\R$ 
such that the characteristic function 
\begin{equation*}
\widehat{\mu}(t)=\int_{-\infty}^{\infty} e^{itx} \mu(dx)
\end{equation*}
has the L{\'e}vy--Khintchine formula 
\begin{equation} \label{LK_1}
\widehat{\mu}(t) = \exp\left[
- \frac{1}{2} a t^2 + i b t 
+ \int_{-\infty}^{\infty}\left(
e^{it\lambda}  - 1 - \frac{it\lambda}{1+\lambda^2} 
\right) \nu(d \lambda)
\right], 
\end{equation}
\begin{equation} \label{LK_2}
\nu(\{0\})=0, \quad \int_{-\infty}^{\infty} \min(1,\lambda^2) \,\nu(d\lambda) < \infty
\end{equation}
\cite[Theorem 8.1 and Remark 8.4]{Sa99}. 
The measure $\nu$ is referred to as the L{\'e}vy measure for $\mu$. 
If the L{\'e}vy measure $\nu$ satisfies $\int_{|\lambda|\leq 1} |\lambda|\,\nu(d\lambda)<\infty$, 
then \eqref{LK_1} can be rewritten as
\begin{equation} \label{LK_3}
\widehat{\mu}(t) = \exp\left[
- \frac{1}{2} a t^2 + i b_0 t 
+ \int_{-\infty}^{\infty}(e^{it\lambda}  - 1) \, \nu(d \lambda)
\right]
\end{equation}
\cite[(8.7)]{Sa99}.  
From the comparison of \eqref{eq_401}--\eqref{eq_402} 
and \eqref{LK_1}--\eqref{LK_2}, 
the relation between screw functions and infinitely divisible distributions is immediate.

\subsection{} 

Equality \eqref{eq_303} gives the L{\'e}vy--Khintchine formula 
\[
\aligned 
\exp(g_0(t))
&=\exp\left(-\frac{t^2}{2}+\frac{1}{2}(e^{it}-1)+\frac{1}{2}(e^{-it}-1)\right) \\
&=
\exp\left[
-\frac{a}{2}t^2 + \int_{-\infty}^{\infty}
( e^{it\gamma} - 1)\nu_0(d\gamma)
\right] 
\endaligned 
\]
with the L{\'e}vy measure 
\[
\nu_0 = 
 \frac{1}{2}\, \delta_1
+ \frac{1}{2}\, \delta_{-1}. 
\]
Hence, there exists a distribution $\mu_0$ such that 
$\exp(g_0(t))=\widehat{\mu_0}(t)$ by \cite[Theorem 8.1 (iii)]{Sa99}. 
Such a distribution can be described explicitly as follows.
\medskip
 
Characteristic functions of the normal distribution (or Gaussian distribution) 
$N(\mu,\sigma)$ with mean $\mu$ and standard deviation $\sigma$ 
and the Poisson distribution ${\rm Pois}(\lambda)$ with parameter $\lambda>0$
are 
\[
\exp\left( -\frac{\sigma^2t^2}{2}+i\mu t \right) 
= \int_{-\infty}^{\infty} \frac{1}{\sigma\sqrt{2\pi}}\,
\exp\left( -\frac{1}{2}\left( \frac{x-\mu}{\sigma} \right)^2 
\right) \,e^{ixt}\,dx, 
\] 
and 
\[
\exp\Bigl( \lambda(e^{it}-1) \Bigr) 
= \int_{-\infty}^{\infty} 
\left( e^{-\lambda} \sum_{k=0}^{\infty} \frac{\lambda^k}{k!}\delta_k(x)
\right) \,e^{ixt}\,dx, 
\]
respectively. 
Since 
\[
\exp(g_0(t))=\exp(-t^2/2)\exp((e^{it}-1)/2)\exp((e^{-it}-1)/2), 
\]
the corresponding distribution is the convolution of
$N(0,1)$, ${\rm Pois}(1/2)$, and the reflected Poisson distribution
$-{\rm Pois}(1/2)$. 
Therefore, 
by taking their convolution, 
\[
\aligned 
\exp(g_0(t))
&= \int_{-\infty}^{\infty} 
\left( 
\frac{1}{e\sqrt{2\pi}}
\sum_{k=0}^{\infty}\sum_{\ell=0}^{\infty} 
\frac{2^{-k-\ell}}{k!\ell!}
\exp\left( -\frac{1}{2}(x-k+\ell)^2\right)
\right) \,e^{ixt}\,dx.
\endaligned 
\]

%
%
\section{Mean periodic functions} 
%
%

\subsection{}
A screw function having a discrete spectral measure $\tau$ 
is sometimes viewed as a mean periodic function. 
Mean periodic functions are generalizations of periodic functions, 
different from almost periodic functions, 
and also generalizations of solutions of 
ordinary differential equations with constant coefficients. 
See \cite[Section 2]{FRS12} and references therein for details on the theory of mean periodicity.

Let $X$ be a locally convex separated topological $\C$-vector space 
consisting of functions on $\R$ such that the
Hahn-Banach theorem is applicable. 
Denote by $X^\ast$ 
the topological dual space of $X$ for some specified topology. 
For $f \in X$, we denote by $\mathcal{T}(f)$ the closure of the $\C$-vector space 
spanned by all translations $\{f(\cdot-r)\,|\,r \in \R\}$. 
Then, $f \in X$ is called  {\it $X$-mean-periodic} 
if $\mathcal{T}(f) \not =X$. 
If the convolution 
\[
(f \ast \varphi)(t) = \int_{-\infty}^{\infty} f(t-r) \varphi(r) \, dr
\]
is well-defined for $f \in X$ and $\varphi \in X^\ast$, 
the $X$-mean-periodicity of $f \in X$ 
is equivalent to $f \ast \varphi =0$
for some nonzero $\varphi \in X^\ast$. 
Such a convolution equation may be regarded as a generalized differential equation. 
It is said that the {\it spectral synthesis} holds in $X$ if 
\[
\mathcal{T}(f)
= \overline{{\rm span}_\C \{ P(t)e^{i\lambda t} \in \mathcal{T}(f),~\lambda \in \C\} }
\]
for any $f \in X$ satisfying $\mathcal{T}(f) \not= X$. 
When the spectral synthesis holds in $X$, 
$f \in X$ is $X$-mean-periodic if and only if 
$f$ is a limit of a sum of exponential polynomials belonging to $\mathcal{T}(f)$ 
with respect to the topology of $X$. 
Such a sum of exponential polynomials may be viewed as
the integral representation \eqref{eq_401}
with a discrete measure $\tau$.

\subsection{} 

Formula \eqref{eq_303} shows that $g_0(t)$ is a solution of the ordinary differential equation
\begin{equation} \label{eq_C01}
\left(i \frac{d}{dt}\right)^3\left[ \left(i \frac{d}{dt}\right)^2-1 \right] f(t) =0. 
\end{equation}
Therefore, for
\begin{equation} \label{eq_C02}
\varphi(t)
:=\left(i \frac{d}{dt}\right)^3\left[ \left(i \frac{d}{dt}\right)^2-1 \right] e^{-t^2}
= -4i t(8t^4-38t^2+27)e^{-t^2}, 
\end{equation}
we have
\[
(g_0 \ast \varphi)(t) = \int g_0(t-u)\varphi(u) \, du \equiv 0 \quad (t \in \R).
\]
Since $\varphi$ decays faster than any exponentials, 
the Fourier integral formula 
\[
\widehat{\varphi}(z) = \int_{-\infty}^{\infty} \varphi(t) e^{izt} \, dt = \sqrt{\pi}e^{-z^2/4}z^3(z^2-1). 
\]
holds for every $z \in \C$. Moreover, the Fourier integral formula
\[
\widehat{(g_0^+ \ast \varphi)}(z) = - \widehat{(g_0^- \ast \varphi)}(z) = -i \sqrt{\pi} (1-2z^2)e^{-z^2/4}
\]
also holds for every $z \in \C$ by taking
\[
g_0^+(t) = 
\begin{cases}
\, g_0(t), & t \geq 0,  \\ 
\, 0, & t <0, 
\end{cases}
\qquad 
g_0^-(t) = 
\begin{cases}
\, 0, & t > 0,  \\ 
\, g_0(t), & t \leq 0, 
\end{cases}
\]
because 
\[
(g_0^+ \ast \varphi)(t) = -(g_0^- \ast \varphi)(t)=-i (8t^2-3)e^{-t^2}. 
\]
Then the Fourier–Carleman transform $\mathsf{FC}(g_0)$ 
is calculated as in 
\[
\mathsf{FC}(g_0)(z) 
= \frac{\widehat{(g_0^+ \ast \varphi)}(z)}{\widehat{\varphi}(z)} 
= - \frac{\widehat{(g_0^- \ast \varphi)}(z)}{\widehat{\varphi}(z)}
= -\frac{i}{z^2} \cdot \frac{1-2z^2}{z(z^2-1)}
\]
and holds for all $z \in \C$. 
In particular, $\mathsf{FC}(g_0)(z)=\mathsf{F}(g_0)(z)$ for $z \in \C_+$. 
The above is irrelevant to which function space $g_0$ belongs. 
However, by properly choosing the space $X$ to which $g_0$ belongs, 
we can say that it is $X$-mean periodic. 
\medskip

Let $\mathcal{E}:=C^\infty(\R)$ be the space of smooth functions on $\R$ 
with the compact uniform convergence topology. 
Then, the dual space $\mathcal{E}^\ast$ 
is the space of all distributions with compact support. 
Clearly, $g_0$ belongs to $\mathcal{E}$. 
The space $\mathcal{T}(g_0)$ generated by translations of $g_0$ 
is spanned by exponential polynomials $t^ne^{\lambda t}$ 
belonging to $\mathcal{T}(g_0)$, 
because the spectral synthesis holds for $C^\infty(\R)$ 
by \cite{Sch47}. 
By \eqref{eq_303}, only five exponential polynomials 
$1$, $t$, $t^2$, $e^{it}$, $e^{-it}$ belong to $\mathcal{T}(g_0)$. 
Therefore, 
\[
\mathcal{T}(g_0) = \C \oplus \C t \oplus \C t^2 \oplus \C e^{it} \oplus \C e^{-it} ~\not=~ \mathcal{E}.
\]
Hence $g_0$ is $\mathcal{E}$-mean-periodic and 
a compactly supported distribution
annihilating $\mathcal{T}(g_0)$
can also be constructed explicitly. 
The space $\mathcal{T}(g_0)$ is nothing but the space formed 
by all the solutions of \eqref{eq_C01}. 

Let $C_{\exp}^\infty(\R)$ be the space of all smooth functions on $\R$ 
such that all their derivatives have at most exponential growth at $\pm \infty$. 
Then, all of the above discussion for $\mathcal E$ remains valid. 
In particular, the spectral synthesis holds in $C_{\exp}^\infty(\R)$ by \cite[Theorem A]{Gil70}. 
In this case, $\varphi$ of \eqref{eq_C02} belongs to $C_{\exp}^\infty(\R)^\ast$ 
and annihilates $\mathcal{T}(g_0)$. 

%
%
\section{The case of Paley--Wiener spaces} \label{section_app_d}
%
%

If we start with the Hermite--Biehler class $\mathcal{HB}$, 
the simplest example is
\[
E_r(z):=\exp(-riz)
\] 
for a positive real number $r$.  
The de Branges space generated by $E_r$ is 
nothing but the Paley--Wiener space, 
the space of Fourier transforms of square-integrable functions on $[-r,r]$: 
\[
\mathcal{H}(E_r)={\rm PW}_r = \mathsf{F}(L^2(-r,r)). 
\]
De Branges spaces were originally introduced as a generalization of Paley--Wiener spaces. 
For $E=E_r$, 
\[
A(z) =A_r(z) = \cos(rz), \quad B(z) =B_r(z) = \sin(rz), 
\]
and the reproducing kernel is 
\[
K(z,w)=K_r(z,w) 
= \frac{\overline{\cos(rz)}\sin(rw) - \cos(rw)\overline{\sin(rz)}}{\pi(w-\bar{z})}
= \frac{\sin(r(w-\bar{z}))}{\pi(w-\bar{z})}.
\]
Clearly, $S_\theta = e^{i\theta}E_r - e^{-i\theta}E_r^\sharp = -2i \sin(rz-\theta)$ does not belong to ${\rm PW}_r$. 
Therefore, the multiplication operator $\mathsf{M}$ is densely defined, and 
\[
F_n(z) := \frac{i \cos(rz)}{\sqrt{\pi r}(z-\frac{\pi}{2r}(2n-1))}, \quad n \in \Z
\]
forms the orthonormal basis of  ${\rm PW}_r$ consisting of 
eigenfunctions of the self-adjoint extension $\mathsf{M}_{\pi/2}$. 
Let $H_r(t):=I$ be the constant matrix-valued function on $I=[0,r]$. 
Then, ${}^{t}[\,\cos(tz)~\sin(tz)\,]$ is the solution of the canonical system 
\eqref{eq_1201} for $H=H_r$ satisfying the conditions of Theorem \ref{thm_12_1}. 
Therefore, $H_r$ is a structure Hamiltonian of $E_r$. 
Further, we see that  
\[
W_r(t,z) := \begin{bmatrix} \cos(tz) & \sin(tz) \\ -\sin(tz) & \cos(tz) \end{bmatrix}
\]
solves \eqref{eq_1205}, that is, this is the fundamental solution for $H_r$. 

For a vector $\mathfrak{F}(t)={}^{t}[\,f(t)~g(t)\,]$ of $L^2(H_r)=\widehat{L}^2(H_r)$, 
its Weyl transform is 
\[
(\mathsf{W} \mathfrak{F})(z)
= 
\frac{1}{\pi}
\int_{0}^{r} (f(t)\cos(tz) + g(t)\sin(tz)) \, dt. 
\]
We set 
$f(-t)=f(t)$ and $g(-t)=-g(t)$ for negative $t$ and 
\[
\Psi_{\mathfrak{F}}(t) :=  f(t) - ig(t) \quad \text{for} \quad t \in [-r,r]. 
\]
Then, 
\[
(\mathsf{W} \mathfrak{F})(z)
= 
\frac{1}{2\pi}
\int_{-r}^{r} \Psi_{\mathfrak{F}}(t)\,e^{izt} \, dt = \frac{1}{2\pi} \widehat{\Psi_{\mathfrak{F}}}(z), 
\]
that is, $\mathsf{W}$ is essentially the Fourier transform. 
To make this correspondence more explicit, note that $\mathfrak{F} \mapsto \Psi_{\mathfrak{F}}$ 
gives an isometric isomorphism $L^2(H_r) \simeq L^2(-r,r)$ (up to a constant multiple), 
because 
\[
\Vert \mathfrak{F} \Vert_{L^2(H_r)}^2 =  \frac{1}{2} \Vert \Psi_{\mathfrak{F}} \Vert_{L^2(-r,r)}^2. 
\]

On the other hand,  for $E=E_r$, 
$\Theta(z)=\Theta_r(z) = \exp(2riz) \in \mathcal{MI}$, 
\[
\sigma(\Theta_r)
= \left\{\left. \frac{\pi}{2r}(2n-1) ~\right|~ n \in \Z\right\}, 
\quad 
\mu_{\Theta_r}(\gamma)=\frac{2\pi}{|\Theta_r^\prime(\gamma)|}= \frac{\pi}{r} \quad 
\text{for $\gamma \in \sigma(\Theta_r)$}. 
\] 
From this and 
\[
\lim_{z \to \frac{\pi}{2r}(2n-1)} 
\frac{F_n(z)}{E_r(z)} = \sqrt{\frac{r}{\pi}}, 
\quad 
\lim_{z \to \frac{\pi}{2r}(2m-1)} 
\frac{F_n(z)}{E_r(z)} = 0 ~(m \not=n), 
\]
we confirm that 
\[
\Vert F \Vert_{{\rm PW}_r} = \left\Vert \left.(F/E_r) \right|_{\sigma(\Theta_r)} \right\Vert_{L^2(d\mu_{\Theta_r})}. 
\]

Moreover, for $E=E_r$, 
\[
Q(z) = Q_r(z) = i \frac{1-\Theta_r(z)}{1+\Theta_r(z)} = \tan(rz) \in \mathcal{N}. 
\]
The partial fraction expansion
\[
\tan(rz) = \lim_{N \to \infty} \frac{1}{r}\sum_{|n| \leq N} \frac{1}{\frac{\pi}{2r}(2n-1)-z}
\]
shows that formula \eqref{eq_601} holds for $a=b=0$ and 
\[
d\tau_r(\gamma) := \frac{d\mu_{\Theta_r}(\gamma)}{\pi}. 
\]
The above $Q_r$ clearly satisfies \eqref{eq_603}. Therefore, 
there exists $g_r \in \mathcal{G}_\infty$ such that
\[
\int_{0}^{\infty} g_r(t) \, e^{izt} \, dt := -\frac{i}{z^2} \tan(rz), \quad \Im(z)>h
\]
holds for some $h \geq 0$ by Theorem \ref{thm_6_1}. 
Using the partial fraction expansion of $\tan(rz)$, we find that 
\[
g_r(t) = \frac{2}{r} \sum_{n=1}^{\infty} \frac{1}{\Bigl(\frac{\pi}{2r}(2n-1)\Bigr)^2} 
\left[ \cos\left(\frac{\pi t}{2r}(2n-1)\right) -1\right]
\]
satisfies the desired equation for $h=0$. 
By definition, we see that $g_r$ satisfies \eqref{eq_401} 
for $g_r(0)=0$, $c=0$, and $d\tau_r$ above. 
Therefore, the norm of $\mathcal{H}(G_{g_r})$ has the form  
\[
\Vert \phi \Vert_{g,r}^2 = \frac{1}{r} \sum_{n \in \Z}
\frac{1}{\Bigl(\frac{\pi}{2r}(2n-1)\Bigr)^2} 
 \left| \widehat{\phi}\left( \frac{\pi}{2r}(2n-1) \right) \right|^2
\]
by \eqref{eq_502}. 
The screw line $\mathfrak{S}:\mathcal{H}(G_{g_r}) \to \mathcal{K}(\Theta_r)$ 
for $g_r$ is given by 
\[
\mathfrak{S}_t(z) 
=  \sqrt{\frac{\pi}{r}}\sum_{n \in \Z}
\frac{\exp(it \frac{\pi}{2r}(2n-1))-1}{\frac{\pi}{2r}(2n-1)} \cdot 
\frac{i(1+\Theta_r(z))}{\sqrt{\pi r}(z-\frac{\pi}{2r}(2n-1))} ,
\]
thus, an isometric isomorphism $\mathcal{H}(G_{g_r}) \to {\rm PW}_r$ is given by 
\[
\frac{1}{\sqrt{\pi}}E_r\widehat{\mathcal{P}}_\phi(z) 
= \frac{1}{\sqrt{r}} \sum_{n \in \Z}
\frac{\widehat{\phi}(\frac{\pi}{2r}(2n-1))}{\frac{\pi}{2r}(2n-1)} \cdot 
\frac{i\cos(rz)}{\sqrt{\pi r}(z-\frac{\pi}{2r}(2n-1))} ,
\]
for $\phi \in \mathcal{H}(G_{g_r})$. The map $\mathsf{L}_r: \mathcal{H}(G_{g_r}) \to L^2(H_r)$ 
satisfying $\mathsf{W}\mathsf{L}_r=\frac{1}{\sqrt{\pi}}E_r\widehat{\mathcal{P}}$ 
is given by 
\[
(\mathsf{L}_r \phi)(t) = \int_{-\infty}^{\infty}L_r(t,s) \phi(s) \, ds
\]
with the kernel
\[
L_r(t,s) = \frac{i\sqrt{\pi}}{r}\sum_{n \in \Z} (-1)^n
\frac{\exp(is \frac{\pi}{2r}(2n-1))-1}{\frac{\pi}{2r}(2n-1)} 
\begin{bmatrix}
\cos(\frac{\pi t}{2r}(2n-1)) \\ \sin(\frac{\pi t}{2r}(2n-1))
\end{bmatrix}. 
\]
As a result, we obtain the following diagram 
\[
\xymatrix{
&&L^2(H_r)\simeq \frac{1}{\sqrt{2}} L^2(-r,r)\ar[rrdd]^-{\qquad \mathsf{W}=\text{Fourier transform}}&&\\\\
\mathcal{H}(G_{g_r}) \ar[rruu]^-{\mathsf{L}_r}
\ar[dd]_-{\Phi_1}
\ar[rrrr]^-{\frac{1}{\sqrt{\pi}}E_r\widehat{\mathcal{P}}}
\ar@{}[rrrruu]|{\circlearrowright}
\ar@{}[rrrrdd]|{\circlearrowright}&&&&{\rm PW}_r\simeq\mathcal{K}(\Theta_r) 
\ar[dd]^-{r_{\Theta_r}} \\
&&&&\\
L^2(\tau_r) \ar[rrrr]_-{\times \frac{1}{\sqrt{\pi}}}&&&& L^2(d\mu_{\Theta_r}) 
}.
\]

%

\bigskip 

\noindent
Department of Mathematics, \\
School of Science, \\ 
Institute of Science Tokyo \\
2-12-1 Ookayama, Meguro-ku, \\
Tokyo 152-8551, Japan  \\[2pt]
Email: {\tt msuzuki@math.sci.isct.ac.jp}

\end{document}